\documentclass{article}

\oddsidemargin 0in
\textwidth 6.5in

\usepackage{amsmath}
\usepackage{amsfonts}
\usepackage{amssymb}
\usepackage{amsthm}
\usepackage{mathdots}
\usepackage{graphicx}
\usepackage{asymptote}
\usepackage{verbatim}

\newtheorem{lemma}{Lemma}
\newtheorem{theorem}{Theorem}

\newtheorem{proposition}{Proposition}
\theoremstyle{definition}
\newtheorem*{definition}{Definition}
\newtheorem*{notation}{Notation}
\newtheorem*{example}{Example}
\newtheorem*{remark}{Remark}

\DeclareGraphicsRule{*}{mps}{*}{}

\title{Closure properties of predicates recognized by deterministic and non-deterministic asynchronous automata}
\author{Maria Monks \\ mm830@cam.ac.uk}

\newcommand{\AND}{\wedge}
\newcommand{\OR}{\vee}

\DeclareMathOperator{\id}{id}

\begin{document}

\maketitle{}

\begin{abstract}
  Let $A$ be a finite alphabet and let $L\subset (A^{\ast})^n$ be an $n$-variable language over $A$.  We say that $L$ is \textit{regular} if it is the language accepted by a synchronous $n$-tape finite state automaton, it is \textit{quasi-regular} if it is accepted by an asynchronous $n$-tape automaton, and it is \textit{weakly regular} if it is accepted by a non-deterministic asynchronous $n$-tape automaton.  We investigate the closure properties of the classes of regular, quasi-regular, and weakly regular languages under first-order logic, and apply these observations to an open decidability problem in automatic group theory. 
\end{abstract}

\section{Introduction}

A finite state automaton is a machine that reads a string of letters over some finite alphabet one letter at a time, and either accepts or rejects the string after reading it.  It has a finite set of internal states, some of which are designated ``start states,'' some of which are designated ``accept states,'' and some may be both or neither.  The string of letters is written on a tape that is fed to the machine.  The machine then starts in one of the start states, and upon reading the first letter, it changes to another state (possibly the same one) that depends on the current state and the letter being read.  It then moves to the second letter on the tape and repeats the process.  When it reaches the end of the string, if the machine is in an accept state, the string is accepted, and otherwise the string is rejected.

There are several different ways to generalize finite state automata to machines that read $n$ tapes at once for some $n\ge 1$, and either accept or reject the entire $n$-tuple of strings written on the tapes.  A \textit{synchronous} $n$-tape automaton reads all $n$ tapes simultaneously and at the same speed, reading the first letter of each tape, then the second letter on each tape, and so on, possibly changing states at each step.  An \textit{asynchronous} $n$-tape automaton reads from one string at a time, and its current state determines which tape it reads from next.  A \textit{non-deterministic asynchronous} $n$-tape automaton has its choice of several possible sets of tapes to read from at each step, and reads one letter from each of those tapes before moving to a next state and repeating the process.

The set of accepted strings or $n$-tuples of strings is called the \textit{language} accepted by the automaton.  In general, an $n$-variable language over a finite alphabet $A$ is any subset of $(A^\ast)^n$, where $A^{\ast}$ is the set of finite strings over $A$.  A language is \textit{regular} if it is accepted by a synchronous automaton, it is \textit{quasi-regular} if it is accepted by an asynchronous automaton, and it is \textit{weakly regular} if it is accepted by a non-deterministic asynchronous automaton.

Regular languages are well studied throughout the literature (see \cite{RabinScott} for an introduction to the topic), but quasi-regular and weakly regular languages are less understood.  In fact, several different definitions of each notion have appeared throughout the literature.  Furthermore, while the relations defined by regular languages are closed under first-order logical operators (union ($\vee$), intersection ($\wedge$), complementation ($\neg$), and projection ($\exists$)), this is not true of quasi-regular or weakly regular languages.  

In this paper, we investigate the properties of quasi-regular and weakly regular languages and their use in automatic group theory.  In section \ref{Unifying}, we unify several of the notions of asynchronous and non-deterministic asynchronous automata that have appeared throughout the literature.  In section \ref{Closure}, we investigate the closure properties of each class of languages under first order logical operators.  In section \ref{Groups}, we apply our results to a decidability problem in asynchronous automatic group theory posed in \cite{Epstein}.

\section{Classes of languages defined by automata}\label{Unifying}

\subsection{Finite state automata on strings}

  For any set $S$, let $P(S)$ denote the power set of $S$.  

\begin{definition}
  A (non-deterministic) \textit{finite state automaton} over an alphabet $A$ is a quadruple $(S,\Delta, S_0, S_f)$, where $S$ is a finite set of states, $\Delta:S\times (A\sqcup \{\epsilon\})\rightarrow P(S)$ is the \textit{transition function}, $S_0\subset S$ is the set of \textit{initial states}, and $S_f\subset S$ is the set of \textit{accept states}.
\end{definition}

The \textit{state diagram} of a non-deterministic finite state automaton $(S,\Delta,S_0,S_f)$ is the edge-labeled directed graph with vertex set $S$ and whose directed edges are the pairs $(s,t)$ of states for which $t\in \Delta(s,x)$ for some $x\in A\sqcup\{\epsilon\}$.  We label such an edge by the letter $x$.  We circle the accept states, and the remaining states are called \textit{failure states}.

  The \textit{language} $L(W)$ accepted by a finite state automaton $W$ is the set of all words $w=x_1x_2\cdots x_n$ for which there is a sequence of states $s_0,s_1,s_2,\ldots,s_n$ with $s_0\in S_0$, $s_n\in S_f$, and $s_{i}\in \Delta(s_{i-1},x_{i-1})$ for $i=1,\ldots,n$.  In terms of the state diagram, a word $w=x_1x_2\cdots x_n$ is accepted by $W$ (that is, $w\in L(W)$) if and only if there is a path of edges, starting at a start state and ending at an accept state, whose labels are $x_1,\ldots,x_n$ in that order.  

  It is often useful to consider the finite state automata for which the next state is completely determined by the current state and the letter being read.

\begin{definition} 
  A finite state automaton $(S,\Delta, S_0, S_f)$ over an alphabet $A$ is \textit{partial deterministic} if there is a unique start state $s_0\in S_0$ and for each state $s\in S$ and letter $x\in A$, we have that $|\Delta(s,x)|\le 1$ and $|\Delta(s,\epsilon)|=0$.  The automaton is \textit{deterministic} if $|\Delta(s,x)|=1$ for all $s\in S$ and $x\in A$.
\end{definition}

In terms of the state diagram, a partial deterministic automaton has a unique start state, no $\epsilon$-arrows, and at most one $x$-arrow starting from each state.  It is deterministic if there is exactly one $x$-arrow starting from each state.

Let $A^\ast$ denote the set of all (possibly empty) strings of letters in $A$.  A \textit{language} over $A$ is any subset of $A^\ast$.  Given languages $L$ and $M$ over $A$, let $L^\ast$ denote the set of all strings formed by concatenating a finite sequence of elements of $L$, and let $LM$ denote the language consisting of all strings of the form $lm$ where $l\in L$ and $m\in M$.  

\begin{definition}\label{regular}
The class of \textit{regular languages} over $A$ is the smallest class of languages over $A$ that:
\begin{itemize}
  \item contains the empty language, 
  \item contains the languages $\{x\}$ for each $x\in A$, and 
  \item is closed under ${}^\ast$, concatenation, union, and intersection.  
\end{itemize}
\end{definition}

A well known theorem by Kleene, Rabin, and Scott (see \cite{Epstein}, \cite{RabinScott}) states that all of these notions are equivalent:

\begin{theorem}[Kleene, Rabin, Scott]\label{KleeneRabinScott}
  Let $L$ be a language over an alphabet $A$.  The following are equivalent:
\begin{itemize}
  \item $L$ is accepted by a deterministic finite state automaton.
  \item $L$ is accepted by a partial deterministic finite state automaton.
  \item $L$ is accepted by a non-deterministic finite state automaton.
  \item $L$ is a regular language over $A$.
\end{itemize}
\end{theorem}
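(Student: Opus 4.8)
The plan is to prove the Kleene--Rabin--Scott theorem by establishing a cycle of implications among the four conditions, which collapses them into a single equivalence class. The easy containments come first: every deterministic automaton is trivially partial deterministic (the condition $|\Delta(s,x)|=1$ is a special case of $|\Delta(s,x)|\le 1$ together with $|\Delta(s,\epsilon)|=0$), and every partial deterministic automaton is a non-deterministic finite state automaton by definition. So the implications ``deterministic $\Rightarrow$ partial deterministic $\Rightarrow$ non-deterministic'' require no work. To close the cycle I would prove ``non-deterministic $\Rightarrow$ regular,'' ``regular $\Rightarrow$ deterministic,'' and separately upgrade partial deterministic to full deterministic so that all four sit in one loop.

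First I would handle ``regular $\Rightarrow$ non-deterministic'' (or directly to deterministic) by structural induction on the construction of a regular language given in Definition~\ref{regular}. The base cases are immediate: the empty language is accepted by an automaton with no accept states, and $\{x\}$ by a two-state automaton with a single $x$-arrow. For the inductive step I would give the standard Thompson-style constructions: given automata $W_1,W_2$ accepting $L_1,L_2$, I build automata for $L_1\cup L_2$ (take the disjoint union and merge or $\epsilon$-link the start states), for $L_1L_2$ (add $\epsilon$-arrows from the accept states of $W_1$ to the start states of $W_2$), and for $L_1^\ast$ (add $\epsilon$-arrows from accept states back to the start, and make the start an accept state for the empty string). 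Intersection is most cleanly handled by the product construction on deterministic automata rather than here, so I would route it through the determinized form. Each construction uses $\epsilon$-transitions, which is why the target is the non-deterministic model.

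Next comes the genuinely substantive direction, ``non-deterministic $\Rightarrow$ deterministic,'' which is the main obstacle. Here I would use the subset construction: given a non-deterministic automaton $(S,\Delta,S_0,S_f)$, define a deterministic automaton whose state set is $P(S)$, whose start state is the $\epsilon$-closure of $S_0$, and whose transition on letter $x$ sends a subset $T$ to the $\epsilon$-closure of $\bigcup_{s\in T}\Delta(s,x)$, with accept states being those subsets meeting $S_f$. The two delicate points are the correct treatment of $\epsilon$-transitions via the $\epsilon$-closure operation, and the proof by induction on word length that the deterministic automaton reaches the subset of exactly those states reachable by some run of the original machine; I would verify that a word is accepted by the subset automaton if and only if some accepting run exists in the original, giving language equality. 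The construction automatically yields a total transition function, so the result is fully deterministic, which also upgrades any partial deterministic (hence any non-deterministic) automaton to a deterministic one and closes the cycle.

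Finally, with the subset construction in hand, closure of the automaton-accepted class under intersection and complementation follows cheaply and lets me confirm the remaining regular-language containments: complementation swaps accept and failure states of a deterministic automaton, and intersection is the product construction on two deterministic automata, which together with the inductive constructions above shows the class accepted by automata is closed under all the operations defining regular languages, hence contains every regular language. Chaining the implications ``regular $\Rightarrow$ non-deterministic $\Rightarrow$ deterministic $\Rightarrow$ partial deterministic $\Rightarrow$ non-deterministic $\Rightarrow$ regular'' establishes the four-way equivalence.
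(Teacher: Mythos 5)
The paper itself gives no proof of Theorem~\ref{KleeneRabinScott}; it cites \cite{Epstein} and \cite{RabinScott}, so your proposal is measured against the standard literature argument, which it mostly reproduces: the trivial inclusions, Thompson-style constructions for ``regular $\Rightarrow$ non-deterministic,'' and the subset construction with $\epsilon$-closure for ``non-deterministic $\Rightarrow$ deterministic'' are all correctly set up (the union case is even easier here than you suggest, since the paper's automata allow a \emph{set} $S_0$ of initial states, so a disjoint union of machines suffices with no merging or $\epsilon$-linking).

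However, there is a genuine gap: the implication ``non-deterministic $\Rightarrow$ regular'' is announced in your opening paragraph and invoked in your closing chain, but never proved. Your final paragraph establishes only that the class of automaton-accepted languages contains the base languages and is closed under the operations of Definition~\ref{regular}, which yields $\text{regular} \subseteq \text{automaton-accepted}$ --- the direction you already had. Since the paper defines ``regular'' as the \emph{smallest} class with those closure properties, showing that an automaton-accepted language is regular requires exhibiting it as built up from $\emptyset$ and the singletons $\{x\}$ by union, intersection, concatenation, and star; closure properties of the automaton class cannot do this. This is the substantive half of Kleene's theorem, and it needs its own construction: for instance, the McNaughton--Yamada induction, where for a deterministic automaton with states $q_1,\ldots,q_m$ one defines $R^{(k)}_{ij}$ to be the set of words carrying $q_i$ to $q_j$ through intermediate states among $q_1,\ldots,q_k$ only, proves each $R^{(k)}_{ij}$ regular by induction on $k$ via the identity $R^{(k)}_{ij}=R^{(k-1)}_{ij}\cup R^{(k-1)}_{ik}\bigl(R^{(k-1)}_{kk}\bigr)^{\ast}R^{(k-1)}_{kj}$ (which uses only union, concatenation, and star), and concludes that $L(W)=\bigcup_{q_j\in S_f} R^{(m)}_{0j}$ is regular; state elimination is an equivalent alternative. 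Without some such argument your cycle of implications does not close, and the four-way equivalence is not established.
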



\subsection{Multi-tape automata}

The notion of a finite state automaton can be generalized to allow multiple tapes to be read by the machine simultaneously.  In this section, we follow the conventions in \cite{Epstein}.  To account for the fact that the word written on one tape may be longer than the word written on another, we introduce a \textit{padding symbol} $\$$ that we use to pad the shorter strings in order to obtain strings of the same length.  

\begin{definition}
  The $n$-tape \textit{padded alphabet} over $A$ is the set $$\left(A\sqcup\{\$\}\right)^n\backslash\{(\$,\$,\ldots,\$)\}.$$   We denote the $n$-tape padded alphabet by $A^\$$ when $n$ is understood.
\end{definition}

In order to distinguish between letters or words over $A$ and letters or words over $A^n$ or $A^\$$, we use the following conventions.  A Greek character such as $\sigma$ or $\mu$ will be used to denote a letter in $A\sqcup\{\$\}$, and we will use $a$, $b$, $c$, $d$, $x$, $y$, and $z$ to denote letters of $A$ (other than $\$$ or $\epsilon$).  The letters $u$, $v$, or $w$ will be used to denote a word over $A$, an overlined Greek character such as $\overline{\sigma}$ will be used to denote an $n$-tuple of letters $(\sigma_1,\ldots,\sigma_n)$, and an overlined English character such as $\overline{w}$ will be used to denote an $n$-tuple of words.  We use $|w|$ to denote the number of letters in $w$.

\begin{definition}
    Given an $n$-tuple of strings $\overline{w}=(w_1,\ldots,w_n)$, let $m=\max_i |w_i|$, and for $k=1,\ldots,m$ define $\overline{\sigma}_k$ to be the $n$-tuple consisting of the $k$th letters of each $w_i$, where the $k$th letter is taken to be a padding symbol $\$$ if $k>|w_i|$.  Then the \textit{padded string} associated with $\overline{w}$ is the word $\overline{\sigma}_1\cdots\overline{\sigma}_m$.  If $L$ is any set of $n$-tuples of strings over $A$, the associated \textit{padded extension} over $A^\$$, denoted $L^\$$, is the language consisting of the padded strings associated with the elements of $L$.
\end{definition}

\begin{example}
 The padded string associated with $(aa,abbc,cab)$ is $(aa\$\$,abbc,cab\$)$.
\end{example}

Notice that, given a language $K$ consisting of only padded strings, we may remove the $\$$ symbols to obtain the unique set $L$ of $n$-tuples of words over $A$ for which $K=L^\$$.

We now have the tools to define an $n$-tape finite state automaton.  

\begin{definition}
  An \textit{$n$-tape deterministic finite state automaton} over $A$ is a deterministic finite state automaton $W$ over the padded alphabet $A^\$$ that only accepts padded strings.  If the language $K$ consisting of the padded strings accepted by $W$ is equal to $L^\$$, then we say that $W$ accepts the language $L$, and that $L$ is an \textit{$n$-tape regular language} over $A$.
\end{definition}

Notice that, by Theorem \ref{KleeneRabinScott}, the definition of $n$-tape regular language agrees with Definition \ref{regular} in the case $n=1$. 


\subsection{Deterministic asynchronous automata}

The $n$-tape automata described above read all $n$ tapes in parallel, at the same speed.  For this reason, we say that such automata are \textit{synchronous}.  We now consider \textit{asynchronous automata}, which read from one tape at a time, and may switch tapes several times in the process.

Two equivalent definitions of two-tape automata have appeared in the literature independently.  The notion was first introduced in \cite{RabinScott}:

\begin{definition}
  A two-tape \textit{asynchronous automaton} over an alphabet $A$ is a deterministic automaton over the alphabet $A\sqcup\{\$\}$, along with a partition of the state set into two sets $S_L$ and $S_R$, called the \textit{left} and \textit{right} state sets.
\end{definition}

Define a \textit{shuffle} of an $n$-tuple of words $(w_1,\ldots,w_n)$ over an alphabet $A$ is an ordering of all the letters of $w_1,\ldots,w_n$ that respects the ordering in each of the words $w_i$.  For instance, two valid shuffles of $(abc,bd)$ are $abbcd$ and $badbc$ (the shuffle $abbcd$ also carries information about which $b$ came from the left or right component, but we use the term `shuffle' loosely to refer to either the mapping of the letters to their position, or to the word spelled by the shuffle).  Also, for any word $w$, define $w\$$ to be the word formed by appending the symbol $\$$ at the end of the string $w$.

The language $L(W)$ accepted by an asynchronous automaton $W$ is defined to be the set of all pairs $(u,v)$ of words over $A$ such that there is some (unique) shuffle of $(u\$,v\$)$ that is accepted by the underlying deterministic automaton and has the property that the automaton must be in a state in $S_L$ to read a letter from $u$ and in a state in $S_R$ to read a letter from $v$.

In \cite{Epstein}, asynchronous automata are defined as follows.

\begin{definition}
  A two-tape asynchronous automaton over an alphabet $A$ is a partial deterministic finite state automaton $W$ over the language $A\cup \{\$\}$, along with a partition of the set of states into five subsets $S_L$, $S_R$, $S_L^{\$}$, $S_R^{\$}$, and $S^{\$}$, such that the following hold:
\begin{itemize}
   \item $S^{\$}$ contains exactly one element, $s^{\$}$, which is also the unique accept state of the automaton.
   \item The start state of $W$ is in either $S_L$ or $S_R$.
   \item An arrow is labeled by $\$$ if and only if it maps a state in $X$ to a state in $Y$, where the pair $(X,Y)$ is one of $(S_L,S_R^{\$})$, $(S_R,S_L^{\$})$, $(S_L^{\$},S^{\$})$, or $(S_R^{\$},S^{\$})$. 
   \item Arrows starting in $S_L$ can only end in $S_L$, $S_R$, or $S_R^{\$}$.
   \item Arrows starting in $S_R$ can only end in $S_L$, $S_R$, or $S_L^{\$}$.
   \item Arrows starting in $S_L^{\$}$ can only end in $S_L^{\$}$ or $S^{\$}$.
   \item Arrows starting in $S_R^{\$}$ can only end in $S_R^{\$}$ or $S^{\$}$.
   \item No arrows start in $S^{\$}$.
\end{itemize}
\end{definition}

In this definition, the language accepted by an asynchronous automaton is the set of all pairs of words $(u,v)$ such that there is a (unique) shuffle of $(u\$, v\$)$ accepted by the underlying deterministic automaton.  Following a $\$$-arrow from, say, $S_L$ to $S_R^\$$ indicates that we have reached the end of the left tape and now only need to read the right tape until we reach another $\$$.

In order to distinguish between these two definitions, we call the former a \textit{semi-sorted} asynchronous automaton, and the latter a \textit{sorted} asynchronous automaton, since the states of the former are only sorted based on the tape being read, while the states of the latter are further sorted based on the number of $\$$ symbols the automaton has read so far.  

We can easily generalize each of these definitions to $n$ tapes.  For simplicity, we write $[n]$ to denote the set $\{1,2,\ldots,n\}$.

\begin{definition}
  An $n$-tape \textit{semi-sorted asynchronous automaton} over an alphabet $A$ is a partial deterministic finite state automaton over the alphabet $A\sqcup\{\$\}$, along with a partition of the state set into $n$ sets $S_1,\ldots,S_n$.
\end{definition}

\begin{definition}
  An $n$-tape \textit{sorted asynchronous automaton} over an alphabet $A$ is a partial deterministic finite state automaton $W$ over the language $A\cup \{\$\}$, along with a partition of the set of states into subsets of the form $S_{i}^V$ where $V$ is a proper subset of $[n]$ and $i\in [n]\backslash V$, and a final subset $S_f^{[n]}$, such that the following hold:
\begin{itemize}
   \item The start state of $W$ is in $S_i^{\emptyset}$ for some $i$.
   \item An arrow is labeled by $\$$ if and only if it maps a state in $X$ to a state in $Y$, where the pair $(X,Y)$ is of the form $(S_i^V,S_j^U)$ with $j\neq i$ and $U=V\cup i$.
   \item Arrows \textit{not} labeled by $\$$ that start in $S_i^V$ must end in $S_j^V$ for some $j\not\in V$.
   \item $S_f^{[n]}$ contains exactly one element, $s^{\$}$, which is also the unique accept state of the automaton.
   \item No arrows start in $S_f^{[n]}$.
\end{itemize}

\end{definition}

We show that these two definitions are equivalent.

\begin{theorem}
  Sorted and semi-sorted asynchronous automata accept the same class of languages.
\end{theorem}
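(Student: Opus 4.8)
The plan is to prove the two classes coincide by giving explicit constructions in both directions, showing that each type of automaton can be converted into the other while preserving the accepted language. Since both definitions are built on partial deterministic automata over $A\sqcup\{\$\}$ whose accepted words are shuffles of $(w_1\$,\ldots,w_n\$)$, the essential content is that the two partitioning conventions carry the same information. The key observation is that a sorted automaton records strictly more data in its states than a semi-sorted one: the superscript $V\subset[n]$ tracks exactly which tapes have already been exhausted (i.e.\ have had their $\$$ read), while the subscript $i$ records which tape is to be read next. A semi-sorted automaton records only the latter. So one direction should be a forgetful map, and the other a refinement.

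First I would handle the easier direction: converting a sorted automaton into an equivalent semi-sorted one. Given a sorted automaton with state partition $\{S_i^V\}$, I define $S_i := \bigcup_{V} S_i^V$ for each $i\in[n]$, simply forgetting the superscript. I would check that the resulting partition $S_1,\ldots,S_n$ satisfies the semi-sorted conditions: since non-$\$$ arrows from $S_i^V$ land in some $S_j^V$ and $\$$-arrows from $S_i^V$ land in some $S_j^{V\cup i}$, in both cases the target lies in some $S_j$, so the structure is consistent, and the underlying partial deterministic automaton and its language are unchanged. The accept state $s^\$$ may be placed in any block (or one verifies the language is unaffected by its placement, since no arrows leave it).

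The harder direction, which I expect to be the main obstacle, is converting a semi-sorted automaton into a sorted one. Here I must manufacture the missing superscript data by tracking, along every computation path, the set of tapes already terminated. The natural approach is a product (or ``subset-tracking'') construction: take states of the form $(s,V)$ where $s$ is a state of the semi-sorted automaton and $V\subset[n]$ is the set of exhausted tapes, assigning $(s,V)$ to the block $S_i^V$ when $s\in S_i$. The subtlety is that a given semi-sorted state $s$ may be reachable along different paths having read $\$$ on different tape-sets, so $V$ is not in general a function of $s$; the product construction resolves this by duplicating states, and one must verify that the $\$$-arrow discipline of the target matches the required pattern $(S_i^V,S_j^{V\cup i})$. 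I would argue that because the accepted words are genuine shuffles of padded tuples, each tape contributes exactly one $\$$, read once, so a correct path visits exactly the superscripts $\emptyset\subset\cdots\subset[n]$ in increasing order; paths that would read a second $\$$ on an already-exhausted tape, or read a non-$\$$ letter on an exhausted tape, are pruned (sent to a dead state) without changing the language. The bookkeeping to confirm that every sorted-automaton axiom holds after this construction, and that the accepted shuffles are exactly preserved, is the routine but delicate heart of the argument.
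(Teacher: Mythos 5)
Your proposal is correct and takes essentially the same route as the paper: the sorted-to-semi-sorted direction is the same forgetful union $S_i=\bigcup_V S_i^V$, and for the converse the paper's construction of $2^n-1$ copies $S_i^V$ of each block $T_i$ is precisely your subset-tracking product $(s,V)$, with $\$$-arrows redirected from superscript level $V$ to level $V\cup\{i\}$, the final $\$$-arrows routed to a single new accept state $s^{\$}$, and the leftover $\$$-arrows pruned. The only cosmetic difference is that the paper simply deletes the offending arrows (the automaton is only partial deterministic, so no dead state is needed) rather than routing them to one.
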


\begin{proof}
  Let $W$ be an $n$-tape sorted asynchronous automaton with (partial) transition function $\Delta$ and with state sets $S_i^V$ and $S_f^{[n]}$ as in the definition. For $i=1,\ldots,n-1$, define $$T_i=\bigcup_V S_i^V$$ where $V$ ranges over the proper subsets of $[n]$ not containing $i$.  Also define $$T_n=\left(\bigcup_V S_n^V\right)\cup S_f^{[n]}$$ where $V$ ranges over the proper subsets of $[n]$ not containing $n$.  Then we see that the partition $\{T_i\}$ makes $W$ into a semi-sorted asynchronous automaton $M$ with $L(M)=L(W)$. 

  Conversely, let $M$ be an $n$-tape semi-sorted asynchronous automaton, with state sets $T_i$, $i=1,\ldots,n$.  We construct a sorted asynchronous automaton $W$ as follows.  We first construct $2^{n}-1$ exact copies of each $T_i$, labeled $S_i^V$ for each proper subset $V$ of $\{1,2,\ldots,n\}$, inheriting any arrows that start and end in $T_i$.  For any set $V$ and any two distinct indices $i,j\not\in V$, we draw arrows between states $s\in S_i^V$ and $t\in S_j^V$ if and only if the corresponding states in $T_i$ and $T_j$ are connected in $M$.  

The quality of being a start state or accept state is \textit{not} inherited, with one exception: if the start state of $M$ is in $T_i$, we define the corresponding element of $S_i^\emptyset$ to be the start state of $W$.   We also construct a new accept state $s^\$$ and define $S_f^{\{1,\ldots,n\}}=\{s^\$\}$.

  We now perform the following operations in order:

\begin{itemize}

  \item For each $i\in [n]$, let $V_i=[n]\backslash\{i\}$.  If an arrow labeled by $\$$ in $M$ starts at a state $s_i\in T_i$ and ends at an accept state of $W$, draw a new arrow in $W$ labeled by $\$$ from the corresponding state in $S_i^{V_i}$ to $s^\$$.

  \item  If an arrow labeled by $\$$ in $M$ starts in $T_i$ and ends in $T_j$, then for each $V$ not containing $i$ or $j$, draw a new arrow in $W$ labeled by $\$$ starting and ending at the corresponding states in $S_i^V$ and $S_j^{V\cup\{i\}}$.  

  \item Remove any $\$$-arrow in $W$ that both starts and ends in any of the sets $S_i^V$.

\end{itemize}

These operations guarantee that when we are done reading the $i$th tape and reach the corresponding $\$$-arrow, the next state is in some $S_j^V$ where $V$ contains $i$.  This makes the resulting automaton $W$ into a sorted asynchronous automaton that accepts the same language as $M$.  This completes the proof.
\end{proof}

We call a language accepted by a (sorted or unsorted) asynchronous automaton a \textit{quasi-regular} language.  We shall see that the class of quasi-regular languages is a strict superset of the class of regular languages.  To illustrate this, we first prove a generalization of the well-known \textit{pumping lemma} for $n$-tape regular languages.

\begin{lemma}\label{npumping}
  Let $L$ be a regular $n$-variable language over an alphabet $A$.  There is a positive integer $p$ such that for any $n$-tuple of words $\overline{w}=(w_1,\ldots,w_n)\in L$ with $\max |w_i|\ge p$, there are nonnegative integers $k\ge 1$ and $l$, with $k+l\le m$, such that if we write each $w_i$ as $u_im_iv_i$ where $x_i$ consists of the $k$th through $(k+l)$th letters of $w_i$, then we have $$(u_1m_1^rv_1,\ldots, u_nm_n^rv_n)\in L$$ for all $r\ge 1$.  Moreover, each substring $x_i$ either consists entirely of letters in $A$ or consists entirely of $\$$ symbols.
\end{lemma}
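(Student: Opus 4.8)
The plan is to reduce to the ordinary one-tape pumping lemma applied to the padded extension $L^\$$, while tracking the padding structure carefully enough that the pumped strings remain genuine padded strings. Since $L$ is regular, by definition its padded extension $L^\$$ is accepted by a deterministic finite state automaton $D=(S,\Delta,\{s_0\},S_f)$ over $A^\$$ that accepts only padded strings; write $N=|S|$. Given $\overline{w}=(w_1,\ldots,w_n)\in L$ with $m=\max_i|w_i|\ge p$ (for a $p$ chosen below), let $z=\overline{\sigma}_1\cdots\overline{\sigma}_m$ be its padded string and let $s_0=q_0,q_1,\ldots,q_m\in S_f$ be the successive states of $D$ as it reads $z$, so that $q_{k-1}\xrightarrow{\overline{\sigma}_k}q_k$.

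The structural observation that drives everything is the following. For each position $k$ let $Z_k=\{i:|w_i|<k\}$ be the set of coordinates that are padded with $\$$ in $\overline{\sigma}_k$. These sets are nondecreasing in $k$, and since at least one coordinate attains the maximum length $m$, no $Z_k$ equals all of $[n]$; hence $Z_k$ takes at most $n$ distinct values and partitions $[1,m]$ into at most $n$ maximal intervals on each of which the $\$$-pattern is constant. First I would choose $p=nN+1$, so that $m\ge p$ forces one of these intervals, say $I=\{a,a+1,\ldots,b\}$, to have length at least $N+1$; otherwise every interval would have length at most $N$ and we would get $m\le nN$.

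Next I would apply the pigeonhole principle inside this interval. The states $q_{a-1},q_a,\ldots,q_b$ number at least $N+2$, so two of them coincide, say $q_c=q_d$ with $a-1\le c<d\le b$; the segment $\overline{\sigma}_{c+1}\cdots\overline{\sigma}_d$ is then a loop at $q_c$ lying entirely inside $I$. Consequently $\overline{\sigma}_1\cdots\overline{\sigma}_c(\overline{\sigma}_{c+1}\cdots\overline{\sigma}_d)^r\overline{\sigma}_{d+1}\cdots\overline{\sigma}_m$ is accepted by $D$ for every $r\ge 0$. Setting $k=c+1$ and $l=d-c-1$ gives $k\ge 1$ and $k+l=d\le m$, and in coordinate $i$ the pumped block $m_i$ is exactly the $i$th component of $\overline{\sigma}_{c+1}\cdots\overline{\sigma}_d$.

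The main point, and where I expect the real work to lie, is verifying that these pumped strings are again \emph{padded} strings, so that they genuinely de-pad to $n$-tuples of words in $L$ rather than merely to accepted strings over $A^\$$. This is precisely what confining the loop to the constant-pattern interval $I$ buys: on $I$ the coordinates in $Z:=Z_a$ are uniformly $\$$ while those outside $Z$ are uniformly letters of $A$, so repeating the block inserts only $\$$ symbols into the already-terminated coordinates and only $A$-letters into the still-running coordinates, preserving the ``letters-then-pads'' shape required of a padded string. This same fact yields the final clause of the lemma, since each $m_i$ is by construction a block of letters of $A$ (when $i\notin Z$) or a block of $\$$ symbols (when $i\in Z$). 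De-padding the pumped strings then places $(u_1m_1^rv_1,\ldots,u_nm_n^rv_n)$ in $L$ for all $r\ge 1$, which completes the argument.
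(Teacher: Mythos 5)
Your proof is correct, and while it shares the standard pumping skeleton with the paper (pigeonhole on states, find a loop, repeat it), it handles the crucial final clause by a genuinely different mechanism. The paper takes $p$ to be simply the number of states, pumps an arbitrary loop, and then derives the purity of each block $m_i$ \emph{indirectly}: if some $m_i$ mixed letters with $\$$ symbols, pumping would produce an accepted string that is not a padded string, contradicting the defining property that an $n$-tape automaton accepts only padded strings. (The paddedness of the pumped string itself is likewise free in the paper's argument, for the same definitional reason.) You instead make the purity hold \emph{by construction}: you partition the positions $[1,m]$ into at most $n$ maximal intervals on which the set $Z_k$ of already-terminated coordinates is constant, take $p=nN+1$ so that some interval has length at least $N+1$, and run the pigeonhole argument inside that interval, so the loop's columns have a fixed $\$$-pattern and the letters-then-pads shape is visibly preserved. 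Your route costs a larger pumping constant ($nN+1$ versus $N$) and more bookkeeping, but it buys a self-contained, constructive argument that does not lean on the automaton rejecting non-padded strings, and it localizes the loop in a structurally homogeneous region, which is a reusable idea (e.g., it would survive relaxations of the definition in which acceptance of junk non-padded strings must be argued around rather than invoked). Both proofs establish exactly the statement as claimed, including the clause that each $m_i$ is all letters or all $\$$ symbols.
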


\begin{proof}
  Let $W$ be an $n$-tape finite state automaton accepting $L$, and let $p$ be the number of states of $L$.  Then if $\overline{w}=(w_1,\ldots,w_n)$ is in $L$ such that $\max |w_i|\ge p$, the path of arrows traced out on $W$ that read $w$ visits at least $p+1$ states, and so some state must be visited more than once.  In particular, there is a loop of some length $l$ in the path, that starts at the $k$th arrow in the path.  For each $i=1,\ldots,n$, let $m_i$ denote the sequence of letters appearing in the $i$th coordinate along this loop.

  We may now form new accepted paths by repeating this loop $r$ times before continuing along the path.  Thus, if we write $w_i=u_im_iv_i$ then $\left(u_1m_1^rv_1,\ldots, u_nm_n^rv_n\right)$ is also accepted by $W$ for any $r\ge 1$.  

  Finally, if $m_i$ consists of some letters and some $\$$ symbols, we would obtain an accepted $n$-tuple of words which is not a padded string, which contradicts the definition of an $n$-tape finite state automaton.   Thus each $m_i$ either consists entirely of letters in $A$ or consists entirely of $\$$ symbols.
\end{proof}

We now provide an example demonstrating that not all quasi-regular languages are regular. \\

\begin{example}\label{QuasiNotRegular}
  The two-variable language $L=\{(x^{n},x^{2n})\mid n\in \mathbb{N}\}$ is quasi-regular but not regular.
\end{example}

\begin{proof}
  Since $L$ is accepted by the sorted asynchronous automaton shown in Figure \ref{sorted1}, $L$ is quasi-regular.

  Now, suppose $L$ were regular.  By Lemma \ref{npumping} there are nonnegative integers $k$ and $l$ such that we may repeat the $k$th to $(k+l)$th letters of each component any number of times to obtain new elements of $L$, as long as either $1\le k \le k+l\le n$ (when both subwords consist only of $x$'s) or $n+1\le k\le k+l\le 2n$ (when the left subword consists only of $\$$ symbols and the right consists only of $x$'s).  We consider these two cases separately.

  If $1\le k \le k+l\le n$, the words $\left(x^{n+r(l+1)},x^{2n+r(l+1)}\right)$ would be in $L$ for each $r\ge 0$, but since $l+1\ge 1$ we have $2(n+r(l+1))=2n+2r(l+1)\neq 2n+r(l+1)$ for $r>0$.  Thus these words are not in the language, a contradiction.

  If $n+1\le k\le k+l\le 2n$, the words $\left(x^{n}, x^{2n+r(l+1)}\right)$ would be in $L$ for $r\ge 0$, again a contradiction since $2n<2n+r(l+1)$ for $r> 0$.

  It follows that $L$ is not regular.
\end{proof}

\begin{figure}
\begin{center}
\includegraphics{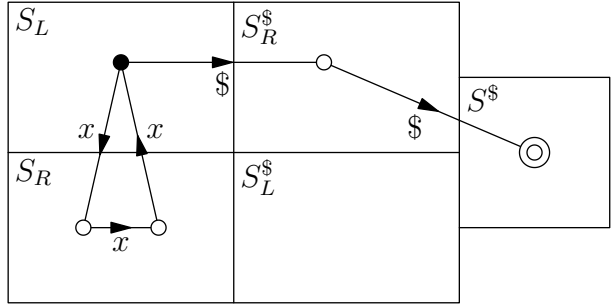}
\caption{\label{sorted1}A sorted asynchronous automaton that accepts the language $L=\{(x^{n},x^{2n})\mid n\in \mathbb{N}\}$.  The darkened circle indicates the start state, and the double circle indicates the accept state of the automaton.  The sets $S_L$, $S_R$, $S_L^\$$, $S_R^\$$, and $S^\$$ are outlined.}
\end{center}
\end{figure}


\subsection{Non-deterministic asynchronous automata}

  We now study \textit{non-deterministic} asynchronous automata, which may read from any of several possible subcollections of the tapes, called \textit{filters}, at each step, and has a choice of several possible next states at each transition.  In \cite{KN}, Khoussainov and Nerode defined these automata as follows.

\begin{definition}\label{FAA}  
Let $E=P\left(P([n])\backslash\{\emptyset\}\right)$, and call $E$ the set of \textit{filters} on $n$ tapes.  Let $A$ be a finite alphabet, and let $A^\$=\left(A\sqcup\{\$\}\backslash\{(\$,\$,\ldots,\$)\}\right)$ be the associated padded alphabet.  Then an \textit{$n$-tape non-deterministic filter asynchronous automaton}, or \textit{FAA}, is a quadruple $(S,S_0,\Delta, S_f)$ where:
\begin{itemize} 
  \item $S$ is a finite set of states,
  \item $S_0\subset S$ is the set of initial states,
  \item $S_f\subset S$ is the set of accept states, and
  \item $\Delta:S\times A^\$\rightarrow P(S)\times E$ is a transition function that, given a state and a letter over $A^\$$, returns a set of next states along with a set of filters, and
  \item for all $\overline{\sigma}=(\sigma_1,\ldots,\sigma_n) \in A^\$$ and for all $s\in S$, if $\sigma_i=\$$ and the set of next states given by $\Delta(s,\overline{\sigma})$ is nonempty, then $i$ is not in any of the filters given by $\Delta(s,\overline{\sigma})$.
\end{itemize}
\end{definition}

An $n$-tuple of words is accepted by a FAA if the following condition is satisfied.  We write the $n$ words in question on $n$ tapes, starting in a start state of $W$, we choose a valid filter as given by the transition function, move one position to the right along precisely those tapes whose index is in that filter, and non-deterministically choose one of the next possible states as the next state.  If this process can be repeated until the end of every tape is reached, and the final state of this process is an accept state, then the tuple is accepted by $W$.

Formally, an $n$-tuple of words $\overline{w}=(w_1,\ldots,w_n)$ is accepted by $W$ if and only if there is a sequence of states $s_0,s_1,\ldots,s_f$ where $s_0$ is a start state and $s_f$ is an accept state, along with an associated sequence of filters $\chi_0,\ldots,\chi_{f-1}$, with the following properties.  For each $k=0,\ldots,f$, let $\overline{\sigma_k}$ be the $n$-tuple whose $i$th coordinate is the $r$th entry of $w_i$, where $r$ is the total number of filters of $\chi_0,\ldots,\chi_k$ containing $i$.  Then if for all $k$, $\Delta(s_k,\overline{\sigma_k})=(S,X)$ where $S$ contains $s_{k+1}$ and $X$ contains $\chi_k$, the $n$-tuple $w$ is accepted by $W$.

\begin{remark}
  This is a slight modification of the original definition of Khoussainov and Nerode in \cite{KN}, which does not include the condition that the state set of $\Delta(s,\overline{\sigma})$ is nonempty in the last bullet point.  Note that, in a FAA, if $\Delta(s,\overline{\sigma})=(\emptyset, X)$ then we cannot make a move starting at $s$ with input $\overline{\sigma}$, and so the content of $X$ does not matter in determining its accepted language.  Thus, the two definitions are equivalent.  We use our modified version throughout.
\end{remark}

Another definition of non-deterministic asynchronous automata in the two-tape case appeared independently in \cite{Shapiro}.  Shapiro defined a two-tape non-deterministic asynchronous automaton to be a non-deterministic automaton along with a partition of the set of states into two sets. We may generalize Shapiro's definition to $n$ tapes as follows.

\begin{definition}
 An \textit{$n$-tape non-deterministic semi-sorted asynchronous automaton} (SAA) over an alphabet $A$ is a non-deterministic finite state automaton over $A\sqcup\$$ along with a partition of the set of states into $n$ sets $S_1,\ldots,S_n$.
\end{definition}

We say that an $n$-tuple of words $\overline{w}=(w_1,\ldots,w_n)$ is accepted by a SAA with transition function $\Delta$ if there is a shuffle $u$ of $(w_1\$,\ldots,w_n\$)$ and a sequence of states $s_1,\ldots,s_{|w_1|+\ldots+|w_n|+n}$ for which $\Delta(s_i,)$ on the diagram of the non-deterministic automaton, starting at a start state and ending at an accept state, that reads the tuple in question, where an arrow from a state in $S_i$ corresponds to reading and moving one position along the $i$th tape.

We will show that the class of languages ($n$-tuple relations) accepted by FAA's is identical to the class of languages accepted by SAA's.  In order to do so, we first define yet another type of automaton that accepts the same class of languages.

\begin{definition}
  A \textit{deterministic-filter (non-deterministic) asynchronous automaton}, or DFAA, is a FAA with the property that, in any given state, there is at most one possible filter to choose from.  In other words, if $\Delta(s,\overline{\sigma})=\left(S,X \right)$ then $|X|\le 1$.
\end{definition}

We now show that FAA's, DFAA's, and SAA's all have the same class of accepted languages.

\begin{theorem}
  Let $L$ be an $n$-variable language over a finite alphabet $A$.  The following are equivalent.
  \begin{itemize}
    \item $L$ is the accepted language of an FAA.
    \item $L$ is the accepted language of an DFAA.
    \item $L$ is the accepted language of a SAA.
  \end{itemize}
\end{theorem}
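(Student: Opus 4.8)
The plan is to prove the three conditions equivalent by a cycle of implications, namely FAA $\Rightarrow$ DFAA $\Rightarrow$ SAA $\Rightarrow$ FAA. Since a DFAA is by definition a special kind of FAA, the implication DFAA $\Rightarrow$ FAA is automatic, so this cycle establishes all three equivalences at once. Throughout, the guiding idea is that the three models differ only in (i) whether the choice of which tapes to advance is deterministic, and (ii) whether several tapes may be advanced in a single step; the claim is that neither feature changes the class of accepted languages.

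For FAA $\Rightarrow$ DFAA I would eliminate the nondeterminism in the filters by recording the intended filter in the state. Concretely, I replace each state $s$ by copies $s^{(\chi)}$ indexed by a filter $\chi$, with the convention that $s^{(\chi)}$ has already committed to advancing exactly the tapes in $\chi$ on its next move. From $s^{(\chi)}$ reading a letter $\overline{\sigma}$ I keep a transition only when $\chi$ is among the filters that $\Delta$ allows at $(s,\overline{\sigma})$, in which case I advance $\chi$ and pass nondeterministically to every copy $t^{(\chi')}$ with $t$ a legal next state and $\chi'$ an arbitrary filter; a copy whose committed filter turns out to be illegal at the letter it reads simply has no outgoing move and contributes nothing. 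The start states become all copies of the old start states, and a copy is accepting exactly when the underlying state was. Projecting $s^{(\chi)}\mapsto s$ shows the two automata have the same accepting runs, and by construction each copy offers at most one filter, so the result is a DFAA.

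The heart of the argument is DFAA $\Rightarrow$ SAA, where I must turn one simultaneous multi-tape step into a sequence of single-tape reads. Given a DFAA state $s$ whose unique filter at the current letter is $\chi=\{i_1<\cdots<i_p\}$, I expand that single transition into a chain of intermediate states which read tapes $i_1,\dots,i_p$ one at a time, placing the state that reads tape $i_j$ into the block $S_{i_j}$ of the partition. Two points need care. First, a DFAA transition may depend on the entire tuple $\overline{\sigma}$, including coordinates of tapes it does not advance; but by the acceptance convention those coordinates are determined by the letters already read on the other tapes, so I can carry that information along in the finite state by remembering the last letter read on each tape, and then each single-tape read only needs the one genuinely fresh letter. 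Second, the padding conventions differ: a DFAA simply stops advancing a tape once it is exhausted, whereas an SAA reads an explicit end-marker $\$$ on each of $w_1\$,\dots,w_n\$$. I reconcile this by appending, after the simulation of an accepting DFAA run, a chain of transitions that read the marker $\$$ on each tape once and then enter a new accept state; since acceptance of an SAA only requires the existence of \emph{some} shuffle, placing all the markers last is legitimate.

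Finally, SAA $\Rightarrow$ FAA is essentially the reverse direction and is comparatively easy: a state $s\in S_i$ of the SAA becomes an FAA state that uses the singleton filter $\{i\}$, reading the $i$th coordinate of $\overline{\sigma}$ and advancing only tape $i$ as the SAA prescribes. The one subtlety is again the end-markers: the SAA's moves that read $\$$ advance no tape in the FAA sense, since every FAA filter is nonempty, so I treat each such move as an $\epsilon$-move, guard it by the condition $\sigma_i=\$$ so that it fires only when tape $i$ is genuinely exhausted, and then remove these moves by the usual $\epsilon$-closure, folding each maximal block of $\$$-reads into the following genuine read or into the final acceptance test. I expect the serialization step DFAA $\Rightarrow$ SAA to be the main obstacle, and in particular the bookkeeping that lets single-tape reads reproduce a transition which inspected all $n$ tape heads at once, while simultaneously matching up the two differing $\$$-padding conventions.
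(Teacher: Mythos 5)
Your proposal follows essentially the same route as the paper: your FAA $\Rightarrow$ DFAA step is the paper's construction verbatim (states $(s,\chi)$ committed to a filter, transitions retained only when $\chi$ is legal at $(s,\overline{\sigma})$, nondeterministic passage to all copies $(t,\chi')$), your DFAA $\Rightarrow$ SAA serialization through chains of intermediate states placed in the blocks $S_{j_1},\ldots,S_{j_p}$ is the paper's second construction, and your singleton-filter reinterpretation for SAA $\Rightarrow$ FAA is the paper's third; organizing these as a cycle rather than as two bidirectional equivalences is immaterial.

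Where you differ is in rigor, at exactly the points the paper glosses. The paper's serialization simply relabels an arrow leaving $S_i$ by the single letter $\sigma_i$, silently discarding any dependence of the DFAA transition on coordinates of tapes it does not advance, and it never reconciles the two padding conventions (an FAA never consumes a $\$$ through a filter, while an SAA must read one explicit end-marker per tape); your last-letter bookkeeping and your appended chain of $n$ terminal $\$$-reads address both honestly. One caveat on your SAA $\Rightarrow$ FAA step: ``the usual $\epsilon$-closure'' of the guarded $\$$-moves is not quite enough. The guard $\sigma_i=\$$ remains satisfied for as long as tape $i$ is exhausted, so a closure path could traverse \emph{two} $\$$-arrows out of $S_i$-states; such a path spells $w_i\$\$$ on tape $i$, corresponds to no shuffle of $(w_1\$,\ldots,w_n\$)$, and so accepts nothing in the SAA, yet the closed FAA would accept along it. (For instance, an SAA whose only start-to-accept path reads $a$ and then $\$,\$$ from tape $1$ followed by $\$$ from tape $2$ accepts the empty language, but the naive closure accepts $(a,\epsilon)$.) The fix is routine --- run the closure on the product with a subset of $[n]$ recording which end-markers have been consumed, allowing each at most once and requiring all of them at acceptance --- but it needs to be said. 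Note that the paper's own SAA $\Rightarrow$ DFAA step has the same defect in starker form, since relabeling a $\$$-arrow in $S_i$ with a tuple having $\sigma_i=\$$ and filter $\{i\}$ violates the FAA axiom outright, so on this point your sketch is still ahead of the published argument.
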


\begin{proof}
 Let $W=(S,S_0,\Delta, S_f)$ be a FAA.  We construct a DFAA $W'$ accepting the same language as $W$.

Define the state set of $W'$ to be 
$$S'=S\times (P([n])\backslash{\emptyset})=\{(s,\chi)\mid s\in S\text{ and }\chi\in P([n])\backslash\{\emptyset\}\}.$$  
Define the set of start states $S_0'$ to be the set of states $(s_0,\chi)\in S'$ such that $s_0\in S_0$, and define the set of final states $S_f'$ to be the set of states $(s_f,\chi)\in S'$ such that $s_f\in S_f$.  The transition function $\Delta'$ is given by 
$$\Delta'((s,\chi),\overline{\sigma})=
   \begin{cases}
     \left(\{(t,\mu)\mid t\text{ in the set of states of }\Delta(s,\overline{\sigma})\},\{\chi\}\right) & \text{if }\chi \text{ is a filter of }\Delta(s,\overline{\sigma}) \\ 
     \left(\{\},\{\chi\}\right) & \text{otherwise}
   \end{cases}.$$

In other words, the transition function for $W'$ takes in a pair $(s,\chi)$ and reads an $n$-tuple $\overline{\sigma}$.  If $\chi$ is a possible move of $W$ on state $s$ with input $\overline{\sigma}$, then $W'$ may move to any state $(t,\mu)$ where $t$ is a state that $W$ can reach from $s$ upon input $\overline{\sigma}$ and $\mu$ is any valid filter.  Furthermore, $W'$ moves along precisely those tapes whose index is in the filter $\chi$. 

We now show that $W'$ is a well-defined FAA; since there is a unique filter to choose from in any given state, it then follows that it is a DFAA.  

To show that $\Delta'$ is a valid FAA transition function, let $(s,\chi)$ be any state of $W'$ and let $\overline{\sigma}=(\sigma_1,\ldots,\sigma_n)$ be any $n$-tuple of letters over $A$.  First, suppose $\chi$ is in the set of possible filters of $\Delta(s,\overline{\sigma})$.  Then for any $i$ for which $\sigma_i=\$$, we have that $i\not \in \chi$ since $W$ is a FAA.  Thus $i$ does not occur in the set of possible filters, namely, $\{\chi\}$, of $\Delta'((s,\chi),\overline{\sigma})$.

Otherwise, if $\chi$ is not in the set of possible filters of $\Delta(s,\overline{\sigma})$, then the state set of $\Delta'((s,\chi),\overline{\sigma})$ is empty, and so the condition is trivially satisfied.

We now show that $W'$ accepts the same language as $W$.  Given an accepted $n$-tuple $w$ of words in $W$, there is a path of filters $\chi_1,\ldots, \chi_f$ that one follows from a start state $s_0$ to a final (accept) state $s_f$.  Let $\sigma^1,\ldots,\sigma^f$ be the $n$-tuples of letters that are read at each step along the way.  

Consider the path of states $(s_0,\chi_1), (s_1,\chi_2), \ldots, (s_f,\chi_f)$ in $W'$.  By our definition of $\sigma^1$, we may read $\sigma^1$ with filter $\chi_1$ to move from state $(s_0,\chi_1)$ to $(s_1,\chi_2)$, at which point we are reading $\sigma^2$, and so on, until we read all of $w$ and reach the accept state $(s_f,\chi_f)$.  Thus every word accepted by $W$ is accepted by $W'$. 

Conversely, suppose $(s_0,\chi_1), (s_1,\chi_2), \ldots, (s_f,\chi_f)$ is any path of states in $W'$, ending on an accept state $(s_f,\chi_f)$, that defines the sequence $\sigma^1,\ldots,\sigma^f$ of $n$-tuples of letters being read by the corresponding arrows.  Then there is a path between the states $s_0,\ldots, s_f$ with associated filters $\chi_1,\ldots,\chi_f$ that is accepted by $W$ and reads off precisely these $n$-tuples.   Thus every word accepted by $W'$ is accepted by $W$.

It follows that every language accepted by a FAA is also accepted by an DFAA.  Note that every DFAA is also a FAA by definition, and so every DFAA language is also accepted by a FAA.  This completes the first equivalence.

Now, let $V$ be an arbitrary DFAA. We construct a SAA $P$ that accepts the same language as $V$.  To do so, we first note that the states of $V$ can be sorted into sets based on their associated filter $\chi$. 

We can represent $V$ as a graph with the states as nodes and with arrows between states labeled by $n$-tuples of letters to indicate the transition diagram, where the nodes are sorted into $2^n-1$ disjoint sets, one for each filter $\chi$.  Let $S_i$ be the set whose filter consists only of the tape $i$.  For each state $T=(s,\chi)$ that is not in any $S_i$, we perform the following operation:

\begin{enumerate}
   \item Let $j_1,\ldots,j_k$ be the elements of the filter $\chi$ of $T$.   Then we move $T$ to the set $S_{j_1}$.  
   \item For each arrow starting at $T$, say $T\rightarrow T'$ labeled by $\sigma$, add new states $T_2,\ldots,T_k$ to the sets $S_{j_2},\ldots, S_{j_k}$ respectively, and draw arrows labeled by $\sigma$ from $T$ to $T_2$, from $T_2$ to $T_3$, etc., and then from $T_k$ to $T'$.  
\end{enumerate}

 Once this has been done, we replace the label $\sigma$ on any arrow starting in $S_i$ with the label $\sigma_i$, for it is only this letter which is allowed through.   It is clear that the resulting automaton $P$ accepts the same set of $n$-tuples of words as $V$.  It follows that every language accepted by a DFAA is also accepted by a SAA.

 Finally, given a SAA $V$, we may interpret it as a DFAA by making the associated filter of each state in $S_i$ be the filter $\{i\}$, and re-labeling the arrows starting in $S_i$ with $n$-tuples that match in the $i$th position for each $i$.  Thus every language accepted by a SAA is also accepted by a DFAA.  
\end{proof}


\section{Closure properties} \label{Closure}

A \textit{regular predicate} over an alphabet $A$ is any statement $P(x_1,\ldots,x_n)$ such that the set of tuples of words $(x_i)$ in $A^n$ for which $P$ holds is a regular language.  We can similarly define quasi-regular and weakly regular predicates.  It is known that regular predicates are closed under first-order predicate logic.  We now investigate the closure properties of quasi-regular and weakly regular predicates.

\begin{proposition}\label{QuasiClosure}
  In the following, let $P(x_1,\ldots,x_n)$ and $Q(x_1,\ldots,x_n)$ be $n$-variable quasi-regular predicates.
  \begin{enumerate}
    \item[(a)] The predicate $\neg P(x_1,\ldots,x_n)$ is quasi-regular.
    \item[(b)] The predicate $P(x_1,\ldots,x_n)\AND Q(x_1,\ldots,x_n)$ is not necessarily quasi-regular.  
    \item[(c)] The predicate $P(x_1,\ldots,x_n)\OR Q(x_1,\ldots,x_n)$ is not necessarily quasi-regular.  
    \item[(d)] The predicate $(\exists x_1)P(x_1,\ldots,x_n)$ is weakly regular, but not necessarily quasi-regular.
    \item[(e)] The predicate $(\forall x_1)P(x_1,\ldots,x_n)$ is not necessarily quasi-regular.
    \item[(f)] If $n=2$, the predicate $(\exists x_1)P(x_1,x_2)$ is regular.
    \item[(g)] If $n=2$, the predicate $(\forall x_1)P(x_1,x_2)$ is regular.
  \end{enumerate}
\end{proposition}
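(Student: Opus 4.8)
The plan is to dispatch the deterministic statements (a), (f) and the positive half of (d) by direct constructions, and then obtain (c), (e), and (g) formally from these together with two explicit counterexamples. For (a) the crucial observation is that an asynchronous automaton is (partial) deterministic and, for a fixed input tuple, reads a \emph{unique} shuffle of the padded words $(w_1\$,\dots,w_n\$)$, so each tuple induces a unique maximal run. I would then complement exactly as for one-tape DFAs: first complete the automaton by sending every undefined transition into a small ``draining'' gadget $d_1,\dots,d_n$, with $d_i$ placed in the $i$-th tape class, reading off the remainder of tape $i$ and passing on $\$$ to the next unfinished tape, so that every input is consumed to the end; then interchange the accepting and non-accepting terminal states. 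Acceptance of a completed deterministic asynchronous automaton depends only on the unique state reached after the full padded input is read, so the result accepts exactly $\neg P$, and since the gadget respects the tape-class partition it is again a valid asynchronous automaton.

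For the positive half of (d) I would build a nondeterministic asynchronous automaton for $(\exists x_1)P$ on tapes $2,\dots,n$ by simulating a deterministic asynchronous automaton for $P$ and \emph{guessing} tape $1$: each transition reading a letter (or the $\$$) of tape $1$ is replaced by an $\epsilon$-transition that nondeterministically commits to that letter without consuming real input, while the transitions on the other tapes are kept; this gives weak regularity. Statement (f) is then immediate, since $(\exists x_1)P(x_1,x_2)$ is a \emph{one-variable} weakly regular language and a one-tape nondeterministic asynchronous automaton is just an ordinary automaton over $A\sqcup\{\$\}$, hence regular by Theorem~\ref{KleeneRabinScott}. The three remaining parts follow by De Morgan and (a): for (c), if $P_0\wedge Q_0$ is not quasi-regular (part (b)), put $P=\neg P_0,\ Q=\neg Q_0$, both quasi-regular by (a), and note $P_0\wedge Q_0=\neg(P\vee Q)$, so $P\vee Q$ cannot be quasi-regular; for (e), take $Q=\neg P_1$ with $P_1$ the witness for (d), so that $(\forall x_1)Q=\neg(\exists x_1)P_1$ is quasi-regular iff $(\exists x_1)P_1$ is, and it is not; for (g), $(\forall x_1)P=\neg(\exists x_1)(\neg P)$ is, by (a) and (f), the complement of a regular one-variable language and so regular.

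It remains to supply the two counterexamples, where the real work lies. For (b), over $A=\{x,y\}$ take $P_0=\{(x^ay^b,x^cy^d)\mid a=d\}$ and $Q_0=\{(x^ay^b,x^cy^d)\mid b=c\}$, each quasi-regular via the fixed strategy ``consume tape $2$'s $x$-block, then lock-step the appropriate blocks, then read off the rest.'' Their intersection is the graph of the block-swap $x^ay^b\mapsto x^by^a$. Since any (deterministic or nondeterministic) asynchronous automaton accepts a tuple iff some shuffle of its padded words lies in the regular shuffle-language of the underlying automaton, I would apply the ordinary pumping lemma (the one-tape case of Lemma~\ref{npumping}) to $(x^Ny^N,x^Ny^N)$ with $N$ exceeding the number of states $p$: the initial $p+1$ letters of any accepted shuffle are all $x$'s, so a repeated state yields a loop reading only $x$'s, and pumping it enlarges some $x$-block while leaving both $y$-blocks of length $N$, destroying $a=d$ or $b=c$. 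Thus the intersection is not even weakly regular, hence not quasi-regular.

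For the negative half of (d) take $P_1=\{(\epsilon,x^n,x^n)\}\cup\{(y,x^n,x^{2n})\}$ (quasi-regular: tape $1$ selects the mode, then tapes $2,3$ are read at ratio $1{:}1$ or $1{:}2$), so that $(\exists x_1)P_1=R:=\{(x^n,x^n)\}\cup\{(x^n,x^{2n})\}$. The main obstacle is exactly that $R$ \emph{is} weakly regular (guess the branch), so the shuffle-pumping lemma cannot rule out quasi-regularity and I must use determinism directly. A deterministic asynchronous automaton reads all-$x$ input in an eventually periodic pattern, reading the two tapes in some ratio $a{:}b$ with $a,b\ge1$. If $b/a<2$, vary tape $2$ over the ``double'' inputs $(x^n,x^{2n+j})$: tape $1$ is exhausted first, in a state depending only on $n$, after which the remaining $\Theta\!\big((2-b/a)n\big)$ letters of tape $2$ are read by a fixed one-tape unary automaton that must accept the single in-$R$ length and reject every larger one. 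If $b/a\ge 2$, fix tape $2=x^n$ and vary tape $1$ instead, reaching the analogous situation. Either way the post-exhaustion machine is a unary DFA whose finite accepted set would need a maximum growing linearly in $n$ while its number of states is fixed, which is impossible; hence $R$ is not quasi-regular. The delicate points, beyond this case split over the reading ratio, are verifying that the draining and guessing constructions stay inside the asynchronous format and that the ``one tape exhausts first, then compare the remainder'' reduction is uniform over all deterministic reading strategies.
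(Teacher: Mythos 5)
Your proposal is correct in substance but takes a genuinely different route on most parts. Where the paper simply cites \cite{Epstein} for (a), (f), and (g), you prove them: (a) by complete-and-swap, (f) by observing that the positive construction in (d) applied with $n=2$ yields a \emph{one-tape} SAA, which is just an ordinary non-deterministic automaton over $A\sqcup\{\$\}$ and hence regular by Theorem \ref{KleeneRabinScott}, and (g) by De Morgan from (a) and (f); this makes the proposition essentially self-contained. For (b)/(c) you invert the paper's decomposition: the paper's direct counterexample is the \emph{union} $\{(x^n,x^{2n})\}\cup\{(x^{2n},x^n)\}$, killed by noting that the unique all-$x$ run of a deterministic automaton has a single terminal cycle and so cannot realize two pumping ratios, with intersection then following by De Morgan; you instead give a direct \emph{intersection} counterexample (the block-matching languages $a=d$ and $b=c$, whose intersection is the block-swap graph) and get union by De Morgan. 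Your shuffle-pumping argument there is sound and in fact proves the stronger statement that the intersection is not even weakly regular --- essentially the same device the paper deploys later in Proposition \ref{WeaklyClosure}(c). For the negative half of (d), your $P_1$ with projection $R=\{(x^n,x^n)\}\cup\{(x^n,x^{2n})\}$ is a legitimate substitute for the paper's $\{(y,x^n,x^{2n})\}\cup\{(z,x^{2n},x^n)\}$, and your reading-ratio/residual-unary-DFA analysis is correct in outline, though the paper's cycle-uniqueness argument is shorter and applies verbatim to your $R$; note also that your assumption $a,b\ge 1$ on the eventual reading ratio excludes the degenerate cases $a=0$ or $b=0$ (the machine eventually reading one tape only), which need a sentence but succumb to the same residual analysis.

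There is one genuine gap, in (a). Your draining gadget catches only \emph{undefined} transitions, but a semi-sorted deterministic asynchronous automaton has a second rejection mode: the run can enter a state of class $i$ \emph{after} tape $i$'s $\$$ has already been consumed. At such a point every relevant transition may be defined, yet there is no symbol left to read, so the run is stuck; your completed automaton is stuck at exactly the same point, the input is never read to the end, and the accept/reject swap therefore fails to accept a tuple that the original rejects. The fix is available inside the paper: first convert $M$ to a \emph{sorted} automaton (the equivalence is the paper's first theorem), or equivalently take the product of your completed automaton with a tracker for the set $V\subseteq[n]$ of exhausted tapes and redirect any arrow whose target class lies in $V$ into the drain of some unexhausted tape. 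In the sorted/tracked format no run can attempt to read a finished tape, every input is consumed to the end, and your swap argument then goes through as stated.
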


In summary, $n$-variable quasi-regular predicates (languages) are closed under $\neg$ (complementation), but not under $\vee$ (union), $\wedge$ (intersection), $\exists$ (projection) or $\forall$ (complementation of the projection of the complement).  In the case $n=2$, the application of $\exists$ or $\forall$ yields a $1$-variable regular language.

\begin{proof}
  See \cite{Epstein} for a proof of claims (a), (f), and (g).

  For (b), recall from Example \ref{QuasiNotRegular} that the language $\{x^n,x^{2n}\}$ is quasi-regular.  Similarly, the language $\{x^{2n},x^{n}\}$ is quasi-regular. We show that their union $L:=\{(x^n, x^{2n})\}\cup \{(x^{2n},x^n)\}$ is not quasi-regular.  

  Assume to the contrary that there is a semi-sorted deterministic asynchronous automaton $M$ accepting $L$.  Since $M$ has a finite number of states and the lengths of the paths accepting pairs of the form $(x^n,x^{2n})$ become arbitrarily large, there must exist a cycle in its state diagram.  Since no cycle can contain a $\$$ symbol, the cycle must consist entirely of edges labeled by $x$.  Tracing around this cycle will yield a word of the form $(x^s,x^t)$ for some $s$ and $t$.  

  Choose $N>0$ large enough so that the path accepting $(x^N,x^{2N})$ traverses this cycle at least once.  We can repeat the cycle $k$ times, so that $M$ accepts all words of the form $(x^{N+ks},x^{2N+kt})$ for nonnegative integers $k$.  It follows from the definition of $L$ that $t=2s>0$.

  Similarly, there exists a cycle of the form $(x^{u},x^{v})$ where $u=2v>0$.  These cycles are clearly distinct, and must occur on a path from the start vertex that does not contain any $\$$ symbols.  But since $M$ is deterministic, this is impossible, and we have a contradiction.

  To prove (c), assume to the contrary that $R\AND S$ is quasi-regular for any $n$-variable quasi-regular predicates $R$ and $S$.  Note that $P\OR Q$ is equivalent to $\neg(\neg P\AND \neg Q)$.  Since $\neg P$ and $\neg Q$ are quasi-regular, by our assumption we have that $\neg P\AND \neg Q$ is quasi-regular, and hence $\neg(\neg P\AND \neg Q)$ is quasi-regular as well.  Thus $P \OR Q$ is necessarily quasi-regular, contradicting (b).

  For (d), we first show that the predicate is weakly regular.  Let $M$ be a semi-sorted asynchronous automaton accepting the relation defined by $P$, with state sets $S_1,\ldots, S_n$.  Then we can replace all arrows starting in the state set $S_1$ corresponding to $x_1$ by $\epsilon$-arrows and merge the states of $S_1$ with $S_2$ to obtain a non-deterministic SAA that accepts $(\exists x_1) P(x_1,\ldots,x_n)$.

  To show $(\exists x_1) P(x_1,\ldots,x_n)$ is not necessarily quasi-regular, let $A=\{x,y,z\}$, and let $L=\{(y,x^n,x^{2n})\}\cup\{(z,x^{2n},x^n)\}$, where $n$ ranges over the nonnegative integers.  We show that the predicate $(a,b,c)\in L$ is a quasi-regular predicate over $A$, but its projection $\exists a, (a,b,c)\in L$ is not quasi-regular.  A semi-sorted asynchronous automaton accepting the language $L$ is shown in Figure \ref{sorted2}. 
\begin{figure}[h]
\begin{center}
\includegraphics{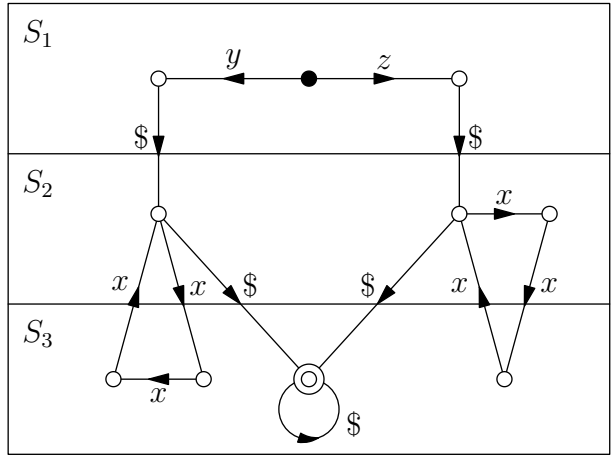}
\caption{\label{sorted2}A semi-sorted asynchronous automaton accepting the language $\{(y,x^n,x^{2n})\}\cup \{(z,x^{2n},x^n)\}$.}
\end{center}
\end{figure}
  Now, the predicate $\exists a, (a,b,c)\in L$ defines the two-variable language $\{(x^n,x^{2n})\}\cup\{(x^{2n},x^n)\}$, which is not quasi-regular, by our example for (b).

  For (e), we note that $(\exists x_1) P(x_1,\ldots,x_n)$ is equivalent to $\neg(\forall x_1) (\neg P(x_1,\ldots,x_n))$.  Thus, if $\forall$ maps quasi-regular predicates to quasi-regular predicates, it would follow that $\exists$ does as well by closure under complementation, contradicting (d).  Thus $\forall$ does not preserve quasi-regularity.
\end{proof}

  In part (d) of the above proposition, we found that applying the $\exists$ operator to a quasi-regular predicate yields a weakly regular predicate.   We now show that every weakly regular predicate can be obtained in this way.

\begin{theorem}\label{Bridge}
  Suppose $P(x_1,\ldots,x_n)$ is an $n$-variable weakly regular predicate.  Then there is an $(n+1)$-variable quasi-regular predicate $Q(x_0,\ldots,x_n)$ for which $$P(x_1,\ldots,x_n)\iff (\exists x_0)Q(x_0,\ldots,x_n).$$
\end{theorem}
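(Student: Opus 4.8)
The plan is to determinize the non-deterministic machine for $P$ by recording its choices on the new tape $x_0$, so that reading $x_0$ turns guessing into mere checking. Since $P$ is weakly regular, fix a non-deterministic semi-sorted asynchronous automaton $M$ accepting the $n$-variable language $L$ defined by $P$, with states partitioned into $S_1,\dots,S_n$. The only obstruction to $L$ being quasi-regular is the non-determinism of $M$: from a state $s\in S_i$ the machine reads a letter of tape $i$ and may jump to any of several successor states. I would define $Q(x_0,x_1,\dots,x_n)$ to hold precisely when $x_0$ spells out a suitable encoding of the sequence of transitions of a single accepting run of $M$ on $(x_1,\dots,x_n)$. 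Then $(\exists x_0)Q(x_0,\dots,x_n)$ asserts exactly that $M$ has some accepting run on $(x_1,\dots,x_n)$, which holds iff $(x_1,\dots,x_n)\in L$, i.e.\ iff $P(x_1,\dots,x_n)$. So everything reduces to showing that $Q$ is quasi-regular.

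To that end I would build a partial deterministic $(n+1)$-tape asynchronous automaton $W$, on tapes indexed $0,1,\dots,n$, that verifies the run encoded on $x_0$. Let $B$ be the finite set of transitions (labelled edges) of the state diagram of $M$, and fix an injective encoding of $B$ into fixed-length blocks of letters of $A$; such an encoding exists whenever $|A|\ge 2$, which we may assume by adjoining an unused letter to $A$, since this does not affect quasi-regularity of a language over the original alphabet. The automaton $W$ keeps in its finite control a \emph{simulated state} $s$ of $M$ together with a flag recording which tape it should read next. It reads one block from tape $0$ to recover a transition $e=(s',\ell,t)$ (an edge from $s'$ to $t$ labelled by $\ell$), and has no outgoing arrow unless $s'$ equals the current simulated state $s$; if $\ell\in A\sqcup\{\$\}$ and $s\in S_i$, it then reads a single letter from tape $i$ and rejects unless that letter equals $\ell$, whereas if $\ell=\epsilon$ it reads nothing from the data tapes. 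In either case it updates the simulated state to $t$ and returns to reading tape $0$. The first block read from tape $0$ is required to be a transition out of a start state of $M$, and upon reaching the end-of-tape symbol of tape $0$, $W$ accepts iff the simulated state is an accept state of $M$ (the data tapes are necessarily exhausted by then, since acceptance in the asynchronous model requires reading every tape in full). Placing each ``read tape $0$'' control state in class $S_0$ and each ``read tape $i$'' control state in class $S_i$ exhibits $W$ as a semi-sorted asynchronous automaton, and it is deterministic because the flag and simulated state determine which tape to read, while the block (resp.\ letter) read then determines the successor control state uniquely.

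With $W$ in hand, $Q(x_0,\dots,x_n):=\big((x_0,\dots,x_n)\in L(W)\big)$ is quasi-regular by definition, and it remains to verify the equivalence in both directions. Given an accepting run $s_0,s_1,\dots,s_f$ of $M$ on $(x_1,\dots,x_n)$, taking $x_0$ to be the encoded list of transitions used produces a string on which $W$ runs successfully, so $P\Rightarrow(\exists x_0)Q$; conversely, any $x_0$ with $Q(x_0,\dots,x_n)$ true is read by $W$ as a legal accepting run of $M$, so $(\exists x_0)Q\Rightarrow P$.

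The step I expect to be the main obstacle is getting the coordination between the witness tape and the data tapes right inside the one-tape-at-a-time, state-determined asynchronous model: $W$ must interleave reads of tape $0$ with reads of the correct data tape while remaining deterministic, and the padding symbols of all $n+1$ tapes must line up with $M$'s reading of each $w_i\$$ so that exhaustion of the data tapes coincides exactly with $M$ completing its run. Recording whole transitions rather than merely successor states is what makes this robust, since it lets $W$ decide locally and deterministically whether a given step consumes a data letter or is an $\epsilon$-move; the encoding of $B$ into $A$ is a minor technical point handled as above.
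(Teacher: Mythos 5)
Your proposal is correct and is essentially the paper's own argument: both use the extra tape $x_0$ as a witness that records the nondeterministic choices of the SAA $M$, so that the resulting $(n+1)$-tape machine is partial deterministic and $(\exists x_0)Q$ recovers $P$. The only difference is in implementation: the paper surgically reroutes just the sources of nondeterminism ($\epsilon$-arrows, duplicate-labeled arrows, and multiple start states) through new states in an added class $S_0$ that read fresh letters of an extended alphabet, whereas you encode the entire accepting run as fixed-length blocks on tape $0$ and check it with a deterministic simulator --- an equivalent, if somewhat heavier, realization of the same idea.
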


\begin{proof}
  Let $L$ denote the language defined by $P(x_1,\ldots,x_n)$. Let $M$ be a non-deterministic semi-sorted asynchronous automaton (SAA) over an alphabet $A=\{\sigma_1,\ldots,\sigma_n\}$, with state sets $S_1,\ldots,S_n$, accepting the language $L$.  We construct from $M$ a semi-sorted asynchronous automaton $M''$, with an additional state set $S_0$, as follows.

  Let $k$ be the number of $\epsilon$-arrows appearing in the state diagram of $n$.  We choose any ordering of the $\epsilon$-arrows, and perform the following operation on the $i$th $\epsilon$ arrow for $i=1,\ldots,k$.  We create a new state $s_i$ in the new state set $S_0$, and make $s_i$ an accept state or start state if and only if $s$ is an accept state or start state, respectively.  Suppose the $\epsilon$ arrow begins in a state $r_1$ and ends in a state $r_2$ defined by $P(x_1,\ldots,x_n)$.  For each arrow $\alpha$ from any other state $t$ into $r_1$, we draw a new arrow with the same label as $\alpha$ from $t$ to $s_i$, and an arrow labeled by a new letter $\sigma_{n+i}$ from $s_i$ to $r_2$.  Then, we remove the $\epsilon$ arrow.

  We now have a new SAA $M'$ over an extended alphabet $\{\sigma_1,\ldots,\sigma_{n+k}\}$ having no $\epsilon$-arrows (here $k$ is the number of $\epsilon$-arrows in the original automaton $M$).  Note that we have simply re-routed every path through the original $\epsilon$-arrows with the use of extra letters appearing in the $0$th component, and so the $n$-tuples of words appearing as the last $n$ words in an $(n+1)$-tuple accepted by $M'$ are precisely those $n$-tuples in $L$.  Thus, $M'$ accepts a language $L'$ whose projection onto the last $n$ variables is the language $L$.

  Next, we modify $M'$ to form a semi-sorted asynchronous automaton $M''$, accepting another language $L''$ whose projection onto the last $n$ variables is also $L$.  Let $j$ be the total number of arrows $\alpha$ of $M$ such that the state $s$ at which $\alpha$ begins has at least one more arrow with the same label as $\alpha$ beginning at $s$.  (Notice that $s$ cannot lie in $S_0$, since in our construction above, every arrow starting in $S_0$ was given a unique label.)  Choose an ordering $\alpha_1,\ldots,\alpha_j$ of these arrows.  

  For each state $s$ having two arrows of the same label $\sigma$ beginning at $s$, we create a new state $s'$ in $S_0$.  We make $s'$ an accept state or start state if and only if $s$ is an accept state or start state, respectively.  Next, we draw an arrow labeled by $\sigma$ from $s$ to $s'$.  Now, each arrow labeled by $\sigma$ starting at $s$ is one of the arrows $\alpha_i$ by construction.  Suppose $\alpha_i$ ends at the state $s_i$.  We draw an arrow from $s'$ to $s_i$ labeled by a new letter $\sigma_{n+k+i}$, and we remove the arrow $\alpha_i$.  Notice that there is now exactly one arrow labeled $\sigma$ beginning at $s$, and by following the arrow into $S_0$, we can come out to any of the states that $\sigma$ originally pointed to in $M'$.  Thus, we have re-routed the redundant arrows through a single arrow into $S_0$, without changing any of the nonzero components of our accepted paths.

  We now have an automaton with no $\epsilon$-arrows and at most one arrow of each label starting from a given state.  Thus, to make it partial deterministic, we only need to consider the possibility that there are multiple start states.  Let $t_1,\ldots,t_m$ be the start states of the automaton.  We construct a new start state $r$ in $S_0$, and for each $t_i$ we draw an arrow from $r$ to $t_i$ labeled by a new letter $\sigma_{n+k+j+i}$.  We then make the states $t_i$ into non-start states.  This yields a semi-sorted asynchronous automaton $M''$, accepting a quasi-regular language $L''$, such that $(\exists x_0)(x_1,\ldots,x_n)\in L''$ defines the language $L$.
\end{proof}

\begin{proposition}\label{WeaklyClosure}
   In the following, let $P(x_1,\ldots,x_n)$ and $Q(x_1,\ldots,x_n)$ be $n$-variable weakly regular predicates.
\begin{enumerate}
  \item[(a)] The predicate $\neg P(x_1,\ldots,x_n)$ is not necessarily weakly regular.
  \item[(b)] The predicate $P(x_1,\ldots,x_n)\vee Q(x_1,\ldots,x_n)$ is weakly regular.
  \item[(c)] The predicate $P(x_1,\ldots,x_n)\wedge Q(x_1,\ldots,x_n)$ is not necessarily weakly regular.
  \item[(d)] The predicate $(\exists x_1)P(x_1,\ldots,x_n)$ is weakly regular.
  \item[(e)] The predicate $(\forall x_1)P(x_1,\ldots,x_n)$ is not necessarily weakly regular.
  \item[(f)] If $n=2$, the predicate $(\exists x_1)P(x_1,x_2)$ is regular.
\end{enumerate}
\end{proposition}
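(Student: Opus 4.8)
The plan is to prove the two ``positive'' closure results (b) and (d), together with the refinement (f), by direct automaton constructions; to settle the central negative result (c) by exhibiting an explicit intersection and a cycle-pumping argument for non-deterministic asynchronous automata; and then to deduce the remaining negative results (a) and (e) from (b), (c), the Bridge Theorem (Theorem \ref{Bridge}), and Proposition \ref{QuasiClosure} by purely logical manipulations.

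For (b), given SAAs $M_P$ and $M_Q$ accepting the languages of $P$ and $Q$, I would take their disjoint union, keeping both sets of start states and both sets of accept states and letting the $i$th state set be the union $S_i^P\cup S_i^Q$. Since the two sub-automata share no states, a tuple is accepted exactly when it is accepted by $M_P$ or by $M_Q$, so the result is an SAA for $P\OR Q$. For (d), I would take an SAA for $P$ with state sets $S_1,\ldots,S_n$, replace every arrow out of $S_1$ (the arrows that advance tape $1$) by an $\epsilon$-arrow that forgets the letter read, and merge $S_1$ into $S_2$; running the resulting $(n-1)$-tape SAA amounts to existentially guessing the first word, so it accepts $(\exists x_1)P$. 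This is the device already used in Proposition \ref{QuasiClosure}(d). For (f) I would apply (d) to get that $(\exists x_1)P(x_1,x_2)$ is weakly regular; as this is a one-variable language and a one-tape SAA is simply a non-deterministic finite automaton reading $w\$$, Theorem \ref{KleeneRabinScott} shows every one-variable weakly regular language is regular, giving (f).

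The heart of the argument is (c). I would work over $A=\{a,\#\}$ and set $P=\{(a^i\#a^j,\,a^i)\}$ and $Q=\{(a^i\#a^j,\,a^j)\}$, both of which are quasi-regular---hence weakly regular---since a deterministic asynchronous automaton can compare tape $2$ against the block of tape $1$ before (respectively after) the symbol $\#$ while skipping the other block. Their conjunction defines $L_0=\{(a^i\#a^i,\,a^i)\}$, and I would show $L_0$ is not weakly regular by a cycle-pumping argument in the spirit of the proof of Proposition \ref{QuasiClosure}(b). If an SAA with $p$ states accepted $L_0$, then for large $i$ any accepting computation on $(a^i\#a^i,a^i)$ would read more than $p$ genuine letters, so some state would repeat along a cycle that reads at least one non-$\$$ letter. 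Pumping this cycle $r$ times inserts a single contiguous block into tape $1$ and a single contiguous block into tape $2$, since each tape head only advances and so the letters a cycle reads from a given tape form a contiguous infix. Balancing the lengths forces the cycle to read twice as many letters from tape $1$ as from tape $2$, and in particular to insert a nonempty block into each tape. But a single contiguous insertion into $a^i\#a^i$ can lengthen at most one of the two $a$-blocks, so the two halves of tape $1$ become unequal while tape $2$ must still equal each of them---a contradiction, so $L_0=P\AND Q$ is not weakly regular. The main obstacle I anticipate is making the contiguity-of-insertion claim fully rigorous in the non-deterministic setting, where the accepting computation, and hence the location of the pumped cycle, is chosen adversarially; the case analysis must rule out \emph{every} admissible cycle, including those reading from both tapes and those straddling the $\#$.

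Finally I would derive (a) and (e) formally. For (a): if $\neg$ preserved weak regularity, then from $P\AND Q=\neg(\neg P\OR\neg Q)$ and the union closure (b) it would follow that weakly regular predicates are closed under $\AND$, contradicting (c); hence some weakly regular $P$ has $\neg P$ not weakly regular. For (e): given any weakly regular $P$, the Bridge Theorem supplies a quasi-regular $Q$ with $P\iff(\exists x_0)Q$, whence $\neg P\iff(\forall x_0)\neg Q$; since quasi-regular predicates are closed under complement by Proposition \ref{QuasiClosure}(a), $\neg Q$ is quasi-regular and therefore weakly regular. If $\forall$ preserved weak regularity, then $(\forall x_0)\neg Q=\neg P$ would be weakly regular for every weakly regular $P$, contradicting (a). Thus $\forall$ does not preserve weak regularity, completing the proposition.
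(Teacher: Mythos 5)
Your proposal is correct, and it matches the paper everywhere except at the two places that carry the real content, where it takes a genuinely different route. Parts (a), (b), (d) are handled exactly as in the paper's proof (disjoint union of the two SAAs for (b); converting the $S_1$-arrows to $\epsilon$-arrows and merging $S_1$ into $S_2$ for (d); De Morgan from (b) and (c) for (a)), and your (e) is the paper's Theorem \ref{Bridge} argument in contrapositive packaging: the paper instantiates it with the explicit weakly regular language $\overline{L_1}\cup\overline{L_2}$ whose complement is $L_1\cap L_2$, while you quantify over all weakly regular $P$ and contradict (a) directly --- same content. The divergences: for (f) the paper simply cites Shapiro, whereas you derive it from (d) by collapsing to a one-tape SAA, i.e.\ an NFA reading $w\$$, then invoking Theorem \ref{KleeneRabinScott} together with closure of regular languages under right quotient by $\$$; this is sound and makes the proposition self-contained. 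For (c), the paper avoids pumping asynchronous machines entirely: it intersects $L_1=\{(x^nyx^m,x^kyx^n)\}$ with $L_2=\{(x^nyx^m,x^nyx^k)\}$ to get $\{(x^nyx^m,x^nyx^n)\}$, applies (f) to project out the first variable, and kills $\{x^nyx^n\}$ with the classical one-variable pumping lemma. You instead pump directly on a hypothetical SAA for $\{(a^i\mathbin{\#}a^i,\,a^i)\}$; note your example would not submit to the paper's shortcut as stated, since projecting out its \emph{first} coordinate yields the regular language $\{a^i\}$ (you would have to swap the coordinates to use (f)), so for your choice of languages the direct argument is genuinely needed. Its key observation --- that a cycle's reads from each tape form a contiguous infix, so pumping inserts one contiguous block per tape, and a single contiguous insertion cannot grow both $a$-blocks of $a^i\mathbin{\#}a^i$ --- is correct and does dispose of every admissible cycle. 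Two details to nail down beyond the obstacle you flagged: since SAAs may contain $\epsilon$-arrows, apply the pigeonhole to the states occurring at letter-reading events rather than to all positions on the path, so that the repeated state bounds a cycle reading at least one genuine letter; and place the repetition inside a $\$$-free stretch (say, before the first $\$$ is read), because pumping a cycle that contains a $\$$-arrow produces paths spelling no valid shuffle of $(u\$,v\$)$ and hence yields no accepted tuple and no contradiction, whereas a cycle containing $\mathbin{\#}$ is immediately fatal after one extra traversal. The trade-off is clear: the paper's route is shorter but leans on Shapiro's projection theorem, while yours is longer at (c) but entirely self-contained.
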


\begin{proof}

 Claim (f) is shown in \cite{Shapiro}.

 We first prove (b).  Given two $n$-variable weakly regular languages, let $M$ and $N$ be corresponding non-deterministic semi-sorted asynchronous automata (SAA's), with state sets $S_1,\ldots,S_n$ and $T_1,\ldots,T_n$ respectively.  Then the disjoint union of their state diagrams, with state sets $S_1\cup T_1, \ldots, S_n\cup T_n$, is another SAA that accepts the union of the two weakly regular languages.

  For (c), consider the two-variable languages $$L_1=\{(x^n y x^m,x^k y x^n)\}$$ and $$L_2=\{(x^n y x^m,x^n y x^k)\}.$$    First, note that each of $L_1$ and $L_2$ is a weakly regular language; in fact, they are quasi-regular, with the state diagram of a semi-sorted asynchronous automaton accepting $L_1$ shown in Figure \ref{sorted3}.  We can easily modify the diagram to see that $L_2$ is quasi-regular as well.

\begin{figure}[h]
\begin{center}
\includegraphics{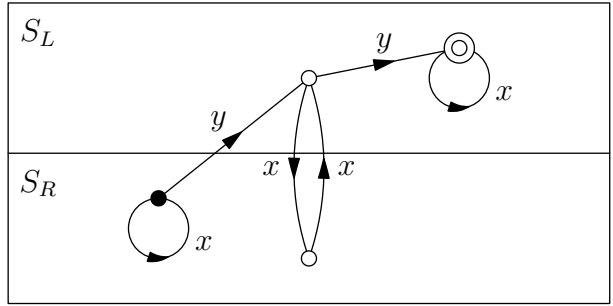}
\caption{\label{sorted3}A semi-sorted asynchronous automaton accepting the language $\{(x^n y x^m,x^k y x^n)\}$.}
\end{center}
\end{figure}

  Now, assume for contradiction that the language $L_1\cap L_2=\{(x^nyx^m, x^nyx^n)\}$ is weakly regular.  By (f), it follows that the one-variable language $\{(x^nyx^n)\}$ is regular.  But the pumping lemma shows that this cannot be regular, and so we have a contradiction.  This proves (c).

  We can now prove (a).  Suppose that the complement of any weakly regular language is weakly regular.  Then using the identity $P\wedge Q=\neg((\neg P)\vee (\neg Q))$ and the fact that weakly regular languages are closed under union, we have that they are closed under intersection, contradicting (c).

  For (d), let $M$ be a SAA accepting the language defined by $P$, with state sets $S_1,\ldots, S_n$.  Then we can replace all arrows starting in the state set $S_1$ corresponding to $x_1$ by $\epsilon$-arrows and merge the states of $S_1$ with $S_2$ to obtain a SAA that accepts the language defined by $(\exists x_1) P(x_1,\ldots,x_n)$.

  Finally, for part (e), we use the languages $L_1$ and $L_2$ defined above.  Since they are quasi-regular, their complements $\overline{L_1}$ and $\overline{L_2}$ are quasi-regular as well.  Thus, the language $L:=\overline{L_1}\cup\overline{L_2}$ is weakly regular by (b).  However, its complement, $\overline{L}=\overline{\overline{L_1}\cup\overline{L_2}}=L_1\cap L_2$, is not weakly regular, as above.

  By Theorem \ref{Bridge}, there is a quasi-regular predicate $R(u,v,w)$ for which $(\exists u)R(u,v,w)$ defines the language $L$.  Thus, the negation of the statement, $\neg (\exists u) R(u,v,w)$ is not weakly regular.  This statement can be rewritten as $(\forall u)\neg R(u,v,w)$.  Since quasi-regular predicates are closed under negation, it follows that there is a quasi-regular (and hence weakly regular) predicate $P(u,v,w)$, namely, $\neg R(u,v,w)$, for which $(\forall u) P(u,v,w)$ is not weakly regular.  This completes the proof.
\end{proof}


\section{Asynchronous automatic structures of finitely presented groups}\label{Groups}

\subsection{Background}

 We first give some background on finitely presented and automatic groups, following the conventions and terminology in \cite{Epstein}.

 Let $A$ be a finite set along with a pairing of its elements, so that paired elements are called \textit{inverses} of each other.  If $x\in A$, we write $x^{-1}\in A$ to denote the formal inverse of $x$ in $A$.  (Note that an element may be its own inverse.)  Then the \textit{free group} on $A$, denoted $F(A)$, is the group under concatenation of all words over $A$ that contain no adjacent inverse generators.  

  A \textit{finite presentation} of a group $G$ consists of a finite inverse-closed set $A\subset G$ called the \textit{generating set} or the set of \textit{generators}, along with a finite set $R\subset A^\ast$ called the set of \textit{relators}, and such that if $N$ denotes the smallest normal subgroup of $F(A)$ containing $R$, then $F(A)/N=G$.  In this case, we write $G=\langle A \mid R\rangle$. 

  Given a finite presentation $G=\langle A\mid R\rangle$ and a word $w\in A^\ast$, we write $\widehat{w}$ to denote the element of $G$ that $w$ represents, that is, when we interpret concatenation as group multiplication.

\begin{definition}
  Let $G=\langle A \mid R\rangle$ be a finitely presented group.  A \textit{automatic structure} for the presentation is a finite state automaton $W$, called the \textit{word acceptor}, along with \textit{multiplier automata} $M_x$ for each $x\in A\sqcup \{\epsilon\}$, such that the following hold:
\begin{itemize}
  \item The language accepted by $L$ represents every element of the group, that is, $\{\widehat{w}\mid w\in L\}=G$.  
  \item For each $x\in A$, $M_x$ is a finite state automaton that accepts precisely those pairs of words $(w_1,w_2)\in L\times L$ for which $\widehat{w_1x}=\widehat{w_2}$.
  \item $M_\epsilon$ is a finite state automaton that accepts precisely those pairs of words $(w_1,w_2)\in L\times L$ for which $\widehat{w_1}=\widehat{w_2}$.
\end{itemize}
\end{definition}

  It is known that if a group has an automatic structure with respect to one set of generators, then it has an automatic structure with respect to every set of generators \cite{Epstein}.  Thus, if a group has a finite presentation with an automatic structure, it is said that the group is \textit{automatic}.

Epstein, et.~al \cite{Epstein} gave a similar definition of an asynchronous automatic group.  

\begin{definition}
  Let $G=\langle A \mid R\rangle$ be a finitely presented group.  An \textit{asynchronous automatic structure} for the presentation is a finite state automaton $W$, called the word acceptor, along with asynchronous multiplier automata $M_x$ for each $x\in A\sqcup \{\epsilon\}$, such that the following hold:
\begin{itemize}
  \item The language accepted by $L$ represents every element of the group, that is, $\{\widehat{w}\mid w\in L\}=G$.  
  \item For each $x\in A$, $M_x$ is an asynchronous automaton that accepts precisely those pairs of words $(w_1,w_2)\in L\times L$ for which $\widehat{w_1x}=\widehat{w_2}$.
  \item $M_\epsilon$ is an asynchronous automaton that accepts precisely those pairs of words $(w_1,w_2)\in L\times L$ for which $\widehat{w_1}=\widehat{w_2}$.
\end{itemize}
\end{definition}

As in the synchronous case, if a group has an asynchronous automatic structure with respect to one set of generators, then it has an asynchronous automatic structure with respect to every set of generators \cite{Epstein}.  Thus, if a group has a finite presentation with an asynchronous automatic structure, we say that the group is \textit{asynchronous automatic}.

\begin{remark}
While every asynchronous automatic group is automatic, the class of asynchronous automatic groups is strictly larger.  In particular, for $p\neq q$, the Baumslag-Solitar group $G_{p,q}=\langle\{x,y\}/\{yx^py^{-1}x^{-q}\}\rangle$ is asynchronous automatic, but not automatic.
\end{remark}

It would seem natural to go on to define a non-deterministic asynchronous automatic group in a similar fashion.  However, in \cite{Shapiro}, Shapiro proved that any such group also admits a (deterministic) asynchronous automatic structure.  For this reason, we work with determinstic asynchronous automatic structures throughout.

One particular type of asynchronous automaton, defined in \cite{Epstein}, will be useful in our study of asynchronous automatic groups.

\begin{definition}
  An asynchronous automaton is \textit{bounded} if there is an integer constant $k$ such that the automaton never reads more than $k$ letters in a row from any of its tapes.  We say that an asynchronous automatic structure is \textit{bounded asynchronous} if each of its multiplier automata are bounded.
\end{definition}

\begin{theorem}[Epstein, et.~al, \cite{Epstein}]\label{Bounded}
  Let $G$ be a group with an asynchronous automatic structure given by an alphabet $A$, a word acceptor $W$, and multiplier automata $M_x$ for $x\in A\sqcup \{\epsilon\}$.  Then $G$ has a boundedly asynchronous automatic structure over $A$, with a language that is a subset of $L(W)$.  Moreover, there is an effective procedure for constructing the boundedly asynchronous automatic structure from the original structure, and this procedure does not depend on $G$.
\end{theorem}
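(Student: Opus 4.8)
The plan is to reduce the boundedness requirement to the finiteness of the set of \emph{word differences} that the multiplier automata can realize, and then to convert that finiteness into a uniform bound on the number of consecutive reads from any one tape. First I would show that the given asynchronous automatic structure automatically enjoys an asynchronous fellow traveler property; second, I would extract from it an explicit constant $k$; and third, I would re-engineer the word acceptor and the multipliers so that they respect the bound $k$ while still accepting the correct relations on a sublanguage of $L(W)$.

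The key lemma is that each state of a deterministic asynchronous multiplier $M_x$ determines a unique word difference. Concretely, suppose two accepted pairs $(w_1,w_2)$ and $(w_1',w_2')$ both drive $M_x$ into the same state $s$ after reading prefixes $(u,v)$ and $(u',v')$, and write $w_1=up$, $w_2=vq$, $w_1'=u'p'$, $w_2'=v'q'$. Because $M_x$ is deterministic and asynchronous, its behaviour from $s$ onward depends only on $s$ and the remaining tape contents, so splicing the suffixes of the second pair onto the first yields another accepted pair $(up',vq')$. Comparing the multiplier identities $\widehat{up'x}=\widehat{vq'}$ and $\widehat{u'p'x}=\widehat{v'q'}$ (both sides rewrite to $\widehat{p'}\widehat{x}\widehat{q'}^{-1}$) forces $\widehat{u}^{-1}\widehat{v}=\widehat{u'}^{-1}\widehat{v'}$. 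Thus the word difference $\widehat{u}^{-1}\widehat{v}$ is a function of the state alone, so only finitely many word differences occur, each of bounded length; this is precisely the asynchronous fellow traveler property.

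Next I would make the bound explicit by a pigeonhole argument. Let $D$ be the number of states of $M_x$. If, while accepting some pair, $M_x$ ever reads more than $D$ letters in a row from a single tape, then two of the intermediate configurations share a state and hence the same word difference; since only the prefix on that one tape changed between them, the intervening subword represents the identity of $G$ and may be excised without altering the represented element. Consequently, after replacing each representative by one from which all such \emph{identity loops} have been removed, every multiplier run advances at most $k:=\max_x D_x$ letters from one tape before switching, which is exactly boundedness.

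Finally I would assemble the bounded structure. Using the finitely many word differences as the states of a \emph{word difference machine}, I would cut $L(W)$ down to a regular sublanguage $L'$ of representatives that contain no excisable identity loop, verify that $L'$ still surjects onto $G$, and realize each $M_x$ restricted to $L'\times L'$ by a bounded asynchronous automaton that short-circuits identity loops through the word difference machine. The main obstacle is this last assembly: excising an identity loop preserves the group element but can leave $L(W)$, so the real work is to show that the fellow traveler property guarantees, \emph{inside} $L(W)$ itself, a regular family of bounded-synchronization representatives, and that the accompanying multipliers remain finite state. Every object in the argument --- the word difference machines, the constant $k$, and the reduction to $L'$ --- is computed mechanically from the input automata and depends only on their numbers of states, which yields the required effective procedure that does not depend on $G$.
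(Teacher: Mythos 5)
Note first that the paper does not prove this theorem: it is quoted with attribution from \cite{Epstein} (Word Processing in Groups, Chapter 7), so your attempt can only be measured against the source's argument. Your first two steps do track that argument correctly: the splicing argument showing that a state of a deterministic asynchronous multiplier determines a unique word difference $\widehat{u}^{-1}\widehat{v}$ is sound, and the pigeonhole step (a block of more than $D$ consecutive reads from one tape forces a repeated state, hence a nonempty factor $z$ with $\widehat{z}=1$ that can be excised from the run) is exactly the right mechanism.

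However, the final assembly --- which you yourself label ``the main obstacle'' and ``the real work'' --- is precisely where the content of the theorem lives, and you leave it unproved: you never define the sublanguage $L'$ as a regular language, never prove it surjects onto $G$, and your worry that excision ``can leave $L(W)$'' is resolved by an observation you are missing. When the pigeonhole produces an identity loop $z$ inside a run of $M_x$ on $(uzv,\,w_2)$, deleting the loop from the run yields a pair $(uv,\,w_2)$ that is still \emph{accepted by} $M_x$; since an asynchronous automatic structure requires $L(M_x)\subseteq L\times L$, the excised word $uv$ automatically lies in $L$. This lets one define $L'$ to be $L$ minus the words admitting a factorization $uzv$ with $1\le |z|\le k$, $\widehat{z}=1$, and $uv\in L$: this is regular (a nondeterministic automaton guesses the factor $z$ from the finite list of short identity words and simulates $W$ both with and without the skip), any length-minimal representative in $L$ of a given group element survives into $L'$ (otherwise excision would give a strictly shorter representative in $L$), and every run of a multiplier on a pair in $L'\times L'$ is forced to be $k$-bounded, since a longer block would exhibit exactly such an excisable factor. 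Restricting $M_x$ to $L'\times L'$ is then a per-tape product construction, and a counter up to $k$ makes the automaton literally bounded. Two smaller gaps: effectiveness requires deciding $\widehat{z}=1$ for the finitely many short words $z$ using only the input automata (this follows from solvability of the word problem via the multipliers, e.g.\ using projections as in part (f) of Proposition \ref{QuasiClosure}), which you assert without justification; and your claim that the finitely many word differences are ``each of bounded length'' is neither obvious nor needed --- finiteness of the set of group elements is all the argument uses.
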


\subsection{Recovering a group from a given set of automata}

Much work has been done on understanding which groups have an automatic structure.  In parallel, the problem has been investigated in reverse: given a set of automata over an alphabet $A$, how can one tell if they are the (asynchronously) automatic structure of some finitely presented group?

In \cite{Epstein}, Epstein, et.~al answered this question in the case of synchronous automatic structures.  In particular, they gave a set of $13$ axioms, each of which are statements about the automata $W$, $M_x$, such that the automata are the automatic structure of some group if and only if all $13$ axioms are satisfied.  Moreover, these axioms are decidable predicates (that is, there is an algorithm that returns $1$ if the predicate is true and $0$ if the predicate is false), and they give an algorithm for finding a finite presentation of the group when it exists.

We now provide a similar result in the case of asynchronous automata.  In light of Theorem \ref{Bounded}, we only consider the case in which the multiplier automata are bounded.

\begin{remark}
While every asynchronous automatic group is automatic, the class of asynchronous automatic groups is strictly larger.  In particular, for $p\neq q$, the Baumslag-Solitar group $G_{p,q}=\langle\{x,y\}/\{yx^py^{-1}x^{-q}\}\rangle$ is asynchronous automatic, but not automatic.
\end{remark}

\begin{theorem}\label{Axioms}
Let $A$ be a finite alphabet, let $W$ be a finite state automaton accepting the regular language $L=L(W)$, and let $\{M_x\}$ be a collection of two-tape boundedly asynchronous automata for each $x\in A\sqcup\{\epsilon\}$.  Then there is a group $G$ for which $W$ and $\{M_x\}$ form an asynchronous automatic structure for $G$ if and only if the following axioms hold:

\begin{enumerate}
  \item $(\exists w)(w\in L).$  

  \item For each $x\in A\cup \{\epsilon\}$, $(\forall w,v)((w,v)\in L_x\implies w\in L \AND v\in L).$

  \item $(\forall w)(w\in L \implies (w,w)\in L_\epsilon).$
 
  \item $(\forall u,v) ((u,v)\in L_\epsilon \implies (v,u)\in L_\epsilon).$

  \item $(\forall u,v,w) (((u,v)\in L_\epsilon \AND (v,w)\in L_\epsilon) \implies (u,w)\in L_\epsilon).$

  \item For each $x\in A$, $(\forall u)(u\in L\implies (\exists v)((u,v)\in L_x)).$

  \item For each $x\in A$, $(\forall u,v,w)(((u,v)\in L_x \AND (v,w)\in L_\epsilon)\implies (u,w)\in L_x).$

  \item For each $x\in A$, $(\forall u,v,w)(((u,v)\in L_\epsilon\AND (u,w)\in L_x) \implies (v,w)\in L_x).$

  \item For each $x\in A$, $(\forall v)(v\in L\implies (\exists u)((u,v)\in L_x).$

  \item For each $x\in A$, $(\forall u,v,w)(((u,v)\in L_x \AND (u,w)\in L_\epsilon)\implies (w,v)\in L_x).$

  \item For each $x\in A$, $(\forall u,v,w)(((u,v)\in L_\epsilon\AND (w,u)\in L_x) \implies (w,v)\in L_x).$

  \item For a word $w=\sigma_1\ldots\sigma_n$ with each $\sigma_i\in A$, we write $[v]\varphi_w=[u]$ to denote the statement $$(\exists v_1,\ldots,v_{n-1})((v,v_1)\in L_{\sigma_1} \AND (v_1,v_2)\in L_{\sigma_2}\AND \cdots \AND (v_{n-1},u)\in L_{\sigma_n}).$$  Then 
$$(\forall u,w,w')((uw\in L \AND uw' \in L)\implies (\forall v)([v]\varphi_w=[uw] \iff [v]\varphi_{w'}=[uw'])).$$
   
  \item Let $c$ be the maximum number of states in any of $W$, $M_\epsilon$, and $M_x$, and let $k$ be the largest boundedness factor of any $M_x$ or $M_\epsilon$.  For each word $w$ over $A$ of length at most $2c+2k$, $$(\exists u)([u]\varphi_w=[u])\implies(\forall u)([u]\varphi_w=[u]).$$

\end{enumerate}
\end{theorem}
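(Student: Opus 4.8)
The plan is to treat both directions by constructing, from the data $(W,\{M_x\})$, a candidate group $G$ and showing that the thirteen axioms are precisely what makes the construction succeed. Throughout I write $L_x$ for the relation accepted by $M_x$ and $L_\epsilon$ for the relation accepted by $M_\epsilon$, and I use the notation $[v]\varphi_w=[u]$ introduced just before axiom 12.

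For the necessity direction I would assume that $W$ and $\{M_x\}$ already form an asynchronous automatic structure for a group $G$, so that $(w_1,w_2)\in L_x\iff\widehat{w_1}\widehat{x}=\widehat{w_2}$ and $(w_1,w_2)\in L_\epsilon\iff\widehat{w_1}=\widehat{w_2}$, and verify each axiom directly. Axioms (1)--(11) then translate into elementary facts about $G$: that $G$ is nonempty (1), that each multiplier relation lies in $L\times L$ (2), that equality is an equivalence relation (3--5), that right multiplication by each generator is total and surjective (6, 9), and that $L_x$ and $L_\epsilon$ are compatible because multiplication respects equality (7, 8, 10, 11). Axiom 12 records that $\widehat{v}\widehat{w}=\widehat{u}\widehat{w}$ forces $\widehat{v}=\widehat{u}$, and axiom 13 records that right multiplication by a fixed element fixes a point only when that element is the identity, i.e.\ that the right regular action is free. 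Each of these holds for short words because it holds for every word, so this direction is routine.

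The substance is the sufficiency direction. Assuming all thirteen axioms, I would set $G=L/L_\epsilon$, which is a well-defined set of equivalence classes by axioms 1 and 3--5. For each $x\in A$ I would define an operator $\varphi_x$ on $G$ by $[u]\varphi_x=[v]$ whenever $(u,v)\in L_x$, and show it is a well-defined bijection: totality and surjectivity come from axioms 6 and 9, invariance under the choice of representatives from the congruence axioms 7, 8, 10, 11, and single-valuedness together with injectivity from these combined with axiom 12. Extending multiplicatively via $\varphi_{\sigma_1\cdots\sigma_m}=\varphi_{\sigma_1}\circ\cdots\circ\varphi_{\sigma_m}$ gives operators $\varphi_w$ realizing the relation $[v]\varphi_w=[u]$. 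The crucial consistency step is to use axiom 12 to prove that $\varphi_w$ depends only on the class of $w$, so that the rule $[u]\ast[v]:=[u]\varphi_v$ is a well-defined, associative operation on $G$; this is the asynchronous analogue of the ``general position'' axiom of the synchronous theory.

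The main obstacle is promoting the fixed-point condition of axiom 13 from words of length at most $2c+2k$ to all words, which is what actually forces $(G,\ast)$ to be a group rather than merely a cancellative monoid. I would argue the contrapositive by a pumping argument: if some $\varphi_w$ had a fixed point without being the identity, I would trace accepting runs of the relevant bounded asynchronous multipliers on a witnessing pair and use that no tape is read more than $k$ letters in a row, together with the state bound $c$, to locate a repeated configuration that can be excised, producing a shorter word with the same defect. Iterating drives the length below $2c+2k$ and contradicts axiom 13. The boundedness hypothesis, available by Theorem \ref{Bounded}, is exactly what keeps the two read-heads from drifting arbitrarily far apart during this excision, and this control is the delicate heart of the argument. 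Once freeness holds for all words, the operators $\{\varphi_x\}$ generate a group acting simply transitively on $G$; transporting this structure to $G$ makes $L$ represent every element, with $M_x$ and $M_\epsilon$ recognizing right multiplication by $x$ and equality by construction, which is precisely an asynchronous automatic structure.
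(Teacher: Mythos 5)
Your outline agrees with the paper's proof up to the decisive step: necessity is checked directly; $X=L/L_\epsilon$ is well defined by Axioms 3--5; Axioms 6--11 yield invertible maps $\varphi_x:X\to X$; Axiom 12 handles prefixes; and the group is recovered from a simply transitive action. But at the heart of the matter --- promoting the fixed-point condition of Axiom 13 from words of length at most $2c+2k$ to all words --- your pumping/excision sketch has a genuine gap. A witness to $[u]\varphi_w=[u]$ is not an accepting run of any single automaton: it is a chain $(v,v_1)\in L_{\sigma_1}$, $(v_1,v_2)\in L_{\sigma_2},\ldots$ through \emph{different} multiplier automata, linked by intermediate witness words $v_i$ of unbounded length. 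Excising a repeated state configuration inside one multiplier's run shortens the intermediate words $v_i$, not $w$ itself, and simultaneously invalidates the adjacent runs that share those words; and there is no pigeonhole available on the intermediate classes $[u]\varphi_{\sigma_1\cdots\sigma_t}$, since $X$ is in general infinite. Even granting an excision of a subword of $w$, the shorter word need not retain the defect $\varphi\neq\id$, since the defect could live entirely in the excised segment. So ``iterating drives the length below $2c+2k$'' is not an argument that can be carried out as stated.

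The paper's mechanism is different and avoids shortening $w$ altogether. Its Lemma \ref{Loops} takes a single relation $(u,u')\in L_x$, uses boundedness to break the accepting shuffle in $M_x$ into blocks $B_1,\ldots,B_n$ and $B_1',\ldots,B_n'$ of length at most $k$, and uses the state bound to complete each partial configuration to acceptance by words $w_t,w_t'$ of total length at most $c-1$, so that $(u\langle t\rangle w_t,\, u'\langle t\rangle w_t')\in L_x$. The auxiliary loop words $r_t=w_t x w_t'^{-1}B_{t+1}'w_{t+1}'x^{-1}w_{t+1}^{-1}B_{t+1}^{-1}$ then have length at most $2c+2k$ and visibly fix $[u\langle t\rangle]$, so Axiom 13 forces each $\varphi_{r_t}=\id$, and telescoping these identities gives $\varphi_u\varphi_x\varphi_{u'}^{-1}=\id$; Lemma \ref{Free} then reduces an arbitrary basepoint-fixing word to such three-term composites by inserting representatives $u_t\in L$ between consecutive letters. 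That is, the $2c+2k$ bound is applied only to manufactured loop words read off from one bounded run, never to $w$. Two smaller points: your claim that Axiom 12 shows $\varphi_w$ depends only on the class of $w$ is a misattribution --- Axiom 12 only makes the classes well defined on the prefix closure, and the independence statement \emph{is} freeness, i.e.\ exactly what Axiom 13 plus the loop lemma deliver; and the framing ``group rather than cancellative monoid'' is off target, since $H$ is a group of bijections already by Axioms 9--11 --- what Axiom 13 buys is freeness of the action, hence injectivity of $h\mapsto[\epsilon]h$, not invertibility.
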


\begin{remark}
  If we are given a collection of (possibly) \textup{unbounded} asynchronous automata, we can first apply the algorithm given by Theorem \ref{Bounded}, check if the resulting automata are bounded (by looking for loops entirely contained in the left or right state set) and then apply Theorem \ref{Axioms}.  Thus, if Axioms $1$-$13$ are decidable for bounded asynchronous automata, then the problem of recovering a group from (possibly unbounded) asynchronous automata is decidable as well.
\end{remark}

It is easily verified that a bounded asynchronous automatic structure of a group must satisfy each of the axioms of Theorem \ref{Axioms}.  In order to prove the reverse direction, we first prove several lemmas.  

\begin{notation}
Throughout the remainder of this section, let $A$ be a finite alphabet, $L=L(W)$ and $L_x=L(M_x)$ for each $x\in A\sqcup\{\epsilon\}$, where $W$ is a finite state automaton and each $M_x$ is a bounded asynchronous automaton over $A$ such that $L$ and $L_x$ satisfy Axioms $1$-$13$.

By Axioms $3$-$5$, we may partition $L$ into a set of equivalence classes $X$ under the equivalence relation $u\sim v$ if and only if $(u,v)\in L_{\epsilon}$.  We write $[u]$ to denote the equivalence class of a word $u\in L$.
\end{notation}

\begin{lemma}
  For each $x\in A$, there exist unique invertible maps $\varphi_x:X\rightarrow X$ (acting on the right) such that for any $u,v\in L$, $[u]\varphi_x=[v]$ if and only if $(u,v)\in L_x$.
\end{lemma}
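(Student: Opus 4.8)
The plan is to define $\varphi_x$ as the relation induced by $L_x$ on the set of classes $X$, and then verify in turn that this relation descends to $X$, is total and surjective, and finally is single-valued and injective, so that it is a well-defined bijection; the defining biconditional then forces uniqueness.

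First I would check that $L_x$ respects the equivalence relation $\sim$ in both coordinates. If $(u,v)\in L_x$ and $u\sim u'$, $v\sim v'$, then applying Axiom $7$ (with $(u,v)\in L_x$ and $(v,v')\in L_\epsilon$) gives $(u,v')\in L_x$, and then Axiom $8$ (with $(u,u')\in L_\epsilon$ and $(u,v')\in L_x$) gives $(u',v')\in L_x$; Axioms $10$ and $11$ yield the same conclusions with the coordinates interchanged. Hence the relation $R_x=\{([u],[v]) : (u,v)\in L_x\}$ is well defined on $X\times X$, independently of the chosen representatives. Totality of $R_x$ --- that every class has at least one image --- is exactly Axiom $6$, and surjectivity (every class occurs as an image) is exactly Axiom $9$. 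At this stage $R_x$ is a total, onto relation on $X$, so it remains only to prove that it is single-valued and injective; granting this, $R_x$ is a bijection and we set $[u]\varphi_x=[v]$ whenever $(u,v)\in L_x$, which is manifestly the unique map satisfying $[u]\varphi_x=[v]\iff(u,v)\in L_x$.

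The single-valued and injective requirements are the real content, and I would extract them from the consistency Axiom $12$, which is precisely the hypothesis forcing the multiplier relations to behave like a consistent right action. Read with $w$ the single letter $x$, the statement $[v]\varphi_x=[z]$ unwinds to $(v,z)\in L_x$; comparing across two words $w,w'$ with $uw,uw'\in L$ shows that the set of classes carried into a fixed image class is determined independently of how that image is spelled, which forces both the image of a given input and the preimage of a given output to be a single class. Concretely, for $u,ux\in L$ one obtains that $[u]$ is the only class $[v]$ with $(v,ux)\in L_x$, the preimage-uniqueness instance of injectivity.

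The main obstacle is the gap between the form in which Axiom $12$ is stated --- where the relevant words are literal concatenations $uw$ lying in $L$ --- and arbitrary classes of $X$, whose representatives need not be concatenations ending in $x$. Bridging this is where the work lies: I would use that $L$ maps onto every group element being constructed to replace an arbitrary image class by one possessing a representative of the required concatenated form, transport the conclusion back along $\sim$ using the closures of the second paragraph, and invoke Axiom $12$ for suitable longer words $w,w'$, with Axiom $13$ supplying the loop-consistency needed when representatives are changed. Once single-valuedness and injectivity hold for all classes, $R_x$ is a total, surjective, single-valued, injective relation, hence an invertible map $\varphi_x:X\to X$; its uniqueness is immediate, since the biconditional $[u]\varphi_x=[v]\iff(u,v)\in L_x$ pins down the value of $\varphi_x$ on every class.
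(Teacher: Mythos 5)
Your setup matches the paper's: Axioms $7$, $8$, $10$, $11$ give $\sim$-closure of $L_x$ in both coordinates, Axiom $6$ gives totality, Axiom $9$ gives surjectivity, and uniqueness is immediate from the biconditional. But the entire mathematical content of the lemma --- that the induced relation on $X$ is single-valued and injective --- is exactly the part you never prove. Your final paragraph is a plan, not an argument (``Bridging this is where the work lies: I would use \ldots''), and the plan as sketched does not go through. Axiom $12$ quantifies over sources $v$ chaining into two \emph{targets} $[uw]$ and $[uw']$ that are literal concatenations lying in $L$ and sharing the same prefix $u$; it constrains preimage sets of such special targets and says nothing that compares two images $[v],[v']$ of a single source class under one letter $x$, since $v$ and $v'$ need not admit representatives of the form $u'x$ with $u'x\in L$, and Axiom $12$ never relates targets that do not share a prefix decomposition. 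Your proposed bridge --- ``use that $L$ maps onto every group element being constructed'' --- is circular at this point in the development: no group exists yet; the group $H$ is generated from the very maps $\varphi_x$ whose well-definedness is at issue. Likewise Axiom $13$ is a fixed-point propagation statement for short words; in the paper it is used much later, to prove that the action of $H$ on $X$ is free, and it supplies no ``loop-consistency'' mechanism for changing representatives here.

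The paper's actual proof is different and shorter at the crucial step: it uses Axiom $6$ to choose, for each $u\in L$, some $v$ with $(u,v)\in L_x$, invokes Axiom $7$ for independence of that choice at the level of classes (single-valuedness), invokes Axiom $8$ to descend to $X$, and then --- rather than proving injectivity separately --- runs the symmetric construction with Axioms $9$--$11$ to build maps $\mu_x$ with $[v]\mu_x=[u]$ iff $(u,v)\in L_x$, whence $\mu_x=\varphi_x^{-1}$ and invertibility is automatic. Axioms $12$ and $13$ play no role in this lemma. So even granting your reading of Axiom $12$, you have misallocated the axioms: functionality and injectivity are meant to come from the local axioms $6$--$11$ (you use $9$--$11$ only for surjectivity and $\sim$-closure, missing that they hand you the inverse map outright), and the step you defer is the one the lemma exists to establish.
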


\begin{proof}
  Fix $x\in A$.  For each $u$, we can use Axiom $6$ to choose a word $v\in L$ such that $(u,v)\in L_x$, and define a map $s_x:L\rightarrow L$ by $us_x=v$.  Then by Axiom $7$, the induced map $s'_x:L\rightarrow X$ by $us'_x=[v]$ is independent of our original choices of $v$.  By Axiom $8$, if $u$ and $w$ are in the same equivalence class then $s'_x$ maps them to the same equivalence class $[v]$, and so $s'_x$ restricts to a map $\varphi_x:X\rightarrow X$ having the desired property.  Finally, Axiom $7$ shows that this map is unique.

  Axioms $9$-$11$ similarly define maps $\mu_x:X\rightarrow X$ for which $[v]\mu_x=[u]$ if and only if $(u,v)\in L$.  Then $\mu_x=\varphi_x^{-1}$ for each $x$, and so we see that the maps $\varphi_x$ are invertible, as desired. 
\end{proof}

This lemma, combined with Axiom $12$, allows us to extend the notion of an equivalence class to the prefix closure of $L$, which we denote by $\overline{L}$, as follows.  For each prefix $u$ of a word $uw$ in $L$, define $[u]=[uw]\varphi_{w}^{-1}$.  Axiom $12$ shows that this is a well-defined equivalence class.

\begin{notation}
  If $w=x_1\cdots x_n$ is a word over $A$, we define $\varphi_w=\varphi_{x_1}\varphi_{x_2}\cdots \varphi_{x_n}$ where $\varphi_{x^{-1}}$ is defined to be $\varphi_x^{-1}$.  
\end{notation}

\begin{lemma}\label{PrefixAction} 
We have $[\epsilon]\varphi_u=[u]$ for any prefix $u\in \overline{L}$.
\end{lemma}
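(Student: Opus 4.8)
The plan is to unwind the definition of the prefix-closure extension at both ends of the prefix $u$ and then reconcile the two expressions using the fact that $w\mapsto\varphi_w$ is multiplicative. Since $u\in\overline{L}$, there is a word $w$ with $uw\in L$, and the extension defines $[u]=[uw]\varphi_w^{-1}$. The empty word $\epsilon$ is likewise a prefix of $uw\in L$, so the very same definition applied to the prefix $\epsilon$ of $uw$ gives $[\epsilon]=[uw]\varphi_{uw}^{-1}$. Both of these are well-defined elements of $X$ by Axiom $12$, exactly as in the discussion preceding the lemma, and $[\epsilon]$ is defined at all because $L$ is nonempty by Axiom $1$, so that $\epsilon\in\overline{L}$.

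Next I would record that the assignment $w\mapsto\varphi_w$ is a homomorphism, that is, $\varphi_{uw}=\varphi_u\varphi_w$ for the concatenation of words $u$ and $w$. This is immediate from the definition $\varphi_w=\varphi_{x_1}\cdots\varphi_{x_n}$ as the ordered product of the generator maps, together with the convention $\varphi_{x^{-1}}=\varphi_x^{-1}$, since concatenating the underlying words simply concatenates the corresponding ordered products. Substituting this into the expression for $[\epsilon]$, and using that each $\varphi_x$ (hence $\varphi_u$) is invertible by the preceding lemma, yields $[\epsilon]=[uw]\varphi_w^{-1}\varphi_u^{-1}$. Acting on the right by $\varphi_u$ then gives $[\epsilon]\varphi_u=[uw]\varphi_w^{-1}\varphi_u^{-1}\varphi_u=[uw]\varphi_w^{-1}=[u]$, which is precisely the claimed identity.

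The result is essentially a bookkeeping consequence of the definitions, so I do not expect a genuine obstacle. The only points demanding care are the two ingredients invoked above: that $[\epsilon]$ is a bona fide, choice-independent class in $X$, which rests on the well-definedness of the prefix extension guaranteed by Axiom $12$, and the multiplicativity $\varphi_{uw}=\varphi_u\varphi_w$, which is a formal property of the definition of $\varphi_w$. Since both are already in hand, the proof reduces to the short computation displayed above.
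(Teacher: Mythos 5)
Your proof is correct and follows essentially the same route as the paper's: both apply the prefix-extension definition to the prefix $\epsilon$ of $uw\in L$ (giving $[\epsilon]\varphi_{uw}=[uw]$) and then use the multiplicativity $\varphi_{uw}=\varphi_u\varphi_w$ to peel off $\varphi_w^{-1}$ and obtain $[\epsilon]\varphi_u=[uw]\varphi_w^{-1}=[u]$. Your version merely makes explicit two points the paper leaves implicit, namely the well-definedness of $[\epsilon]$ via Axioms $1$ and $12$ and the homomorphism property of $w\mapsto\varphi_w$, which is a harmless elaboration rather than a different argument.
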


\begin{proof}
  Notice that if $uw\in L$, then $[\epsilon]\varphi_{uw}=[uw]$ by the definition of the extension of $\sim$ to prefixes, and so $[\epsilon]\varphi_{u}=[uw]\varphi_{w}^{-1}=[u]$.
\end{proof}

Finally, define $H$ to be the group generated by the maps $\varphi_x$ under composition.  Then $H$ acts on $X$ on the right.  We wish to show that this action is transitive and free, for we can then identify the elements of $H$ with the elements of $X$.

\begin{lemma}\label{Transitive}
  The action of $H$ on $X$ is transitive.
\end{lemma}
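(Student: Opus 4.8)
The plan is to prove transitivity by exhibiting a single base point from which every class in $X$ is reachable, namely the class $[\epsilon]$ of the empty prefix. Since $L$ is nonempty by Axiom $1$, the empty word is a prefix of some word in $L$, so $[\epsilon]$ is a well-defined element of $X$ under the extension of $\sim$ to $\overline{L}$. Because transitivity of a group action is equivalent to the existence of a single orbit, it suffices to show that for every $[u]\in X$ some element of $H$ carries $[\epsilon]$ to $[u]$; then any two classes $[u],[v]$ can be joined by first pulling $[u]$ back to $[\epsilon]$ and then pushing it forward to $[v]$.

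The reachability statement is exactly Lemma \ref{PrefixAction}: for any $u\in L$, regarded as a prefix of itself in $\overline{L}$, we have $[\epsilon]\varphi_u=[u]$. Concretely, given $[u],[v]\in X$, I would choose representatives $u,v\in L$ and set $h=\varphi_u^{-1}\varphi_v$. Applying $\varphi_u^{-1}$ on the right to $[\epsilon]\varphi_u=[u]$ gives $[u]\varphi_u^{-1}=[\epsilon]$, and then Lemma \ref{PrefixAction} applied to $v$ yields $[u]h=[u]\varphi_u^{-1}\varphi_v=[\epsilon]\varphi_v=[v]$. Thus $h$ carries $[u]$ to $[v]$, which establishes transitivity once we confirm that $h\in H$.

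The remaining points are bookkeeping rather than genuine difficulty. First, $h\in H$: since the generating set $A$ is inverse-closed and $\varphi_{x^{-1}}=\varphi_x^{-1}$ by convention, each of $\varphi_u$ and $\varphi_u^{-1}$ is a product of the generators $\varphi_x$ and so lies in the group $H$, hence so does $h=\varphi_u^{-1}\varphi_v$. Second, one should check that the intermediate value $[\epsilon]$ is a legitimate element of $X$ on which the maps $\varphi_x$ act, rather than a purely formal symbol; this is guaranteed because the equivalence class of a prefix is defined as $[uw]\varphi_w^{-1}$, which is an element of $X$, so the computation never leaves $X$. I do not expect a substantive obstacle here: essentially all of the content is absorbed into Lemma \ref{PrefixAction}, and the proof reduces to routing the action through the common base point $[\epsilon]$.
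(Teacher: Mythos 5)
Your proof is correct and follows essentially the same route as the paper: both reduce transitivity to the identity $[u]\varphi_u^{-1}\varphi_v=[\epsilon]\varphi_v=[v]$, routing the action through the base point $[\epsilon]$ via Lemma \ref{PrefixAction}. Your additional bookkeeping (that $h\in H$ and that $[\epsilon]\in X$ by Axiom $1$) is sound and merely makes explicit what the paper leaves implicit.
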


\begin{proof}
  Given two equivalence classes $[u]$ and $[v]$ where $u,v\in L$, we note that 
$$[u]\varphi_{u}^{-1}\varphi_v=[\epsilon]\varphi_v=[v],$$ 
and so each equivalence class is mapped to every other under the group action.
\end{proof}

To show that the action is free, we first prove the following lemma.

\begin{lemma}\label{Loops}
  Suppose $x\in A$ and $u,u'$ are words in $L$ such that $[\epsilon]\varphi_u\varphi_x\varphi_{u'}^{-1}=[\epsilon]$.  Then $\varphi_u\varphi_x\varphi_{u'}^{-1}$ is the identity in $H$.
\end{lemma}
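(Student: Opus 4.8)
The plan is to first translate the hypothesis into a statement about the multiplier language, and then to shrink $u$ and $u'$ by a pumping argument until Axiom 13 applies directly. By Lemma~\ref{PrefixAction} we have $[\epsilon]\varphi_u=[u]$ and $[\epsilon]\varphi_{u'}=[u']$, so the hypothesis $[\epsilon]\varphi_u\varphi_x\varphi_{u'}^{-1}=[\epsilon]$ is equivalent to $[u]\varphi_x=[u']$, which by the defining property of $\varphi_x$ is just $(u,u')\in L_x$. Since the alphabet $A$ is inverse-closed, the element $g:=\varphi_u\varphi_x\varphi_{u'}^{-1}$ is exactly $\varphi_w$ for the word $w=u\,x\,(u')^{-1}$ over $A$, and the assertion that $g$ is the identity in $H$ means precisely that $\varphi_w$ fixes every class in $X$. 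Thus if we could arrange $|w|=|u|+|u'|+1\le 2c+2k$, then since $\varphi_w$ already fixes $[\epsilon]$, Axiom 13 would immediately yield $\varphi_w=\mathrm{id}$. The entire content of the lemma is therefore the reduction to the case of short $u$ and $u'$.

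To perform this reduction I would run the three relevant machines in parallel: the word acceptor $W$ on the tape carrying $u$, the bounded multiplier $M_x$ on the accepting shuffle of $(u,u')$, and a second copy of $W$ on the tape carrying $u'$. A combined state records the current state of each of these three automata. Because $M_x$ is bounded with factor $k$, the asynchronous offset between the two tapes never exceeds $k$, so the combined run is controlled and a repeated combined state can be located within a window whose size is governed by $c$ and $k$, yielding the bound $2c+2k$. Whenever a combined state repeats, I would excise the corresponding segments, deleting a factor $p$ from $u$ and a factor $p'$ from $u'$ to obtain shorter words $u_0=\alpha\beta$ and $u_0'=\alpha'\beta'$. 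Ordinary pumping for $W$ shows $u_0,u_0'\in L$, and excising a closed loop of $M_x$ preserves acceptance, so $(u_0,u_0')\in L_x$; hence $g_0:=\varphi_{u_0}\varphi_x\varphi_{u_0'}^{-1}$ again fixes $[\epsilon]$.

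The main obstacle is to show that this excision does not change the group element, i.e.\ that $g=g_0$ in $H$. Writing $u=\alpha p\beta$ and $u'=\alpha' p'\beta'$, we have $g=\varphi_\alpha\varphi_p\varphi_\beta\varphi_x\varphi_{\beta'}^{-1}\varphi_{p'}^{-1}\varphi_{\alpha'}^{-1}$ while $g_0=\varphi_\alpha\varphi_\beta\varphi_x\varphi_{\beta'}^{-1}\varphi_{\alpha'}^{-1}$, so after cancelling the outer factors the required identity is $\varphi_p\,M=M\,\varphi_{p'}$ with $M=\varphi_\beta\varphi_x\varphi_{\beta'}^{-1}$. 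In other words, I must show that the sub-loop read during the excised segment acts trivially, so that the factors $\varphi_p$ and $\varphi_{p'}$ cancel through the middle rather than altering $g$. This is exactly where the fellow-traveler Axiom 12 enters: equality of the combined state at the two endpoints of the loop means that the intermediate prefix classes realign, and Axiom 12 (the well-definedness of prefix classes on $\overline{L}$ that it guarantees) forces this realignment to be consistent, with boundedness ensuring it occurs within a window of length at most $k$. Establishing this cancellation cleanly — keeping careful track of the asynchronous reading order so that the excised chunks on the two tapes correspond correctly — is the delicate part of the argument.

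Once preservation is established, iterating the excision reduces to the case $|u|+|u'|+1\le 2c+2k$ without changing $g$, and Axiom 13 then finishes the proof. I expect this lemma to be the crucial local-to-global step toward Lemma~\ref{Transitive}'s companion freeness statement: it shows that every elementary loop $\varphi_u\varphi_x\varphi_{u'}^{-1}$ fixing the basepoint is globally trivial, which is the heart of proving that the action of $H$ on $X$ is free.
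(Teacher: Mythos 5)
Your opening reduction is sound and matches the paper's first step: via Lemma \ref{PrefixAction} the hypothesis becomes $[u]\varphi_x=[u']$, i.e.\ $(u,u')\in L_x$, and Axiom 13 is indeed the engine that must finish the argument. But your route --- pump the triple product of $W$, $M_x$, $W$ to shorten $(u,u')$, then apply Axiom 13 once to the short word $u\,x\,(u')^{-1}$ --- has a gap that you yourself flag and do not close, and it is not a technicality: the claim that excising a loop preserves the element $g=\varphi_u\varphi_x\varphi_{u'}^{-1}$ of $H$, i.e.\ $\varphi_p M = M\varphi_{p'}$ with $M=\varphi_\beta\varphi_x\varphi_{\beta'}^{-1}$, is an identity \emph{in $H$} among the maps $\varphi$, and it is essentially a statement of the same type as the lemma itself (a loop element acting trivially). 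Axiom 12 cannot supply it: Axiom 12 only makes the prefix classes well defined, so it yields equalities of elements of $X$ (classes obtained by applying maps to particular basepoints), never equalities of elements of $H$. Deducing $\varphi_p M=M\varphi_{p'}$ from the repetition of automaton states is exactly the local-to-global triviality being proven, so the reduction is circular as it stands. There is also a quantitative failure: pumping the product automaton only shortens the instance until the shuffle length is below the number of \emph{product} states, which is on the order of $c^3$ (two copies of $W$ times $M_x$), whereas Axiom 13 applies only to words of length at most $2c+2k$. Your claim that the asynchronous offset between the tapes never exceeds $k$ is also false --- boundedness limits each \emph{block} of consecutive same-tape reads to $k$ letters, not the cumulative offset --- so no window of size governed by $c$ and $k$ is available to repair the count.

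The paper's proof avoids both problems by never shortening $(u,u')$ at all. Using boundedness, it cuts the accepting shuffle into blocks $B_1,\ldots,B_n$ and $B_1',\ldots,B_n'$ of length at most $k$; from the configuration reached after $t$ blocks it takes a shortest path to the accept state, producing tails $w_t,w_t'$ of total length at most $c-1$ with $(u\langle t\rangle w_t,\,u'\langle t\rangle w_t')\in L_x$. For each $t$ it then forms the single short loop word
\begin{equation*}
 r_t \;=\; w_t\,x\,w_t'^{-1}\,B_{t+1}'\,w_{t+1}'\,x^{-1}\,w_{t+1}^{-1}\,B_{t+1}^{-1},
\end{equation*}
of length at most $2c+2k$, verifies by a direct chain of class computations that $\varphi_{r_t}$ fixes $[u\langle t\rangle]$, and invokes Axiom 13 separately for each $t$ to get $\varphi_{r_t}=\id$. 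Multiplying these $n$ identities makes the tails $w_t,w_t'$ telescope, leaving $\varphi_{B_1'\cdots B_n' x^{-1}}=\varphi_{B_1\cdots B_n}$, i.e.\ $\varphi_{u'}\varphi_x^{-1}=\varphi_u$. In short, the paper decomposes the one long loop into many short loops each within Axiom 13's reach, rather than attempting to replace the long loop by a single short one --- precisely the move that sidesteps the preservation problem on which your proposal founders. Your closing intuition (that this lemma is the local-to-global step behind freeness in Lemma \ref{Free}) is correct, but the proof as proposed is incomplete.
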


\begin{proof}
   First note that the assumption implies $[u]\varphi_x=[u']$, since $u,u'\in L$, and so by the definition of the maps $\varphi_x$ we have that $(u,u')$ is accepted by the asynchronous automaton $M_x$.  Recall that this assigns a unique shuffle to $(u,u')$ as well.

For each $t\ge 0$, define $u\langle t\rangle$ to be the word formed by the first $t$ blocks of consecutive letters of $u$ in the shuffle of $u$ and $u'$.  By our assumption, each block is of length at most $k$.  Let the blocks of $u$ be $B_1,\ldots,B_n$ and those of $u'$ be $B_1',\ldots,B_n'$ (where one of $B_n$ or $B_n'$ may be the empty string, depending on the shuffle, and all other blocks are nonempty and of length at most $k$).  In this notation, we have $u\langle t \rangle=B_1\cdots B_t$ for $t\le n$ and $u\langle t \rangle=B_1\cdots B_n$ for $t>n$.

  Note that for each $t$, at some point in the path accepting $(u,u')$ in $M_x$ we have traversed a shuffle corresponding to $(u\langle t \rangle, u'\langle t \rangle)$.  By removing possible loops from this path, the shortest path from this point to the accept state is less than the number of states of $M_x$, so it is at most $c-1$ where $c$ is the maximum number of states of any of the automata $M_x$ or $M_\epsilon$.  Thus there exist words $w_t$ and $w'_t$ of total length at most $c-1$ for which $(u\langle t \rangle w_t,u'\langle t \rangle w'_t)\in L_x$.

  Consider the word $$r_t:=w_tx{w'}_{t}^{-1}B'_{t+1}w'_{t+1}x^{-1}w_{t+1}^{-1}B_{t+1}^{-1}.$$  This has length at most $c-1+1+k+c-1+1+k=2c+2k$.  We wish to show it fixes some element of $X$, in order to apply Axiom $13$.   We have
\begin{eqnarray*}
  [u\langle t\rangle] \varphi_{r_t} &=& [u \langle t \rangle] \varphi_{w_t x w_{t}'^{-1} B_{t+1}' w_{t+1}' x^{-1} w_{t+1}^{-1} B_{t+1}^{-1}} \\
                                   &=& [u\langle t\rangle w_t] \varphi_{x}\varphi_{w_{t}'^{-1}B_{t+1}'w_{t+1}'x^{-1}w_{t+1}^{-1}B_{t+1}^{-1}} \\
				   &=& [u'\langle t\rangle w_{t}'] \varphi_{w_{t}'^{-1}B_{t+1}' w_{t+1}' x^{-1}w_{t+1}^{-1}B_{t+1}^{-1}} \\
				   &=& [u'\langle t\rangle] \varphi_{B_{t+1}'w_{t+1}'x^{-1}w_{t+1}^{-1}B_{t+1}^{-1}} \\
				   &=& [u'\langle t+1\rangle] \varphi_{w_{t+1}'x^{-1}w_{t+1}^{-1}B_{t+1}^{-1}} \\
				   &=& [u'\langle t+1\rangle w_{t+1}'] \varphi_{x^{-1}}\varphi_{w_{t+1}^{-1}B_{t+1}^{-1}} \\
				   &=& [u\langle t+1\rangle w_{t+1}] \varphi_{w_{t+1}^{-1}B_{t+1}^{-1}} \\
				   &=& [u\langle t+1\rangle] \varphi_{B_{t+1}^{-1}} \\
				   &=& [u\langle t\rangle]
\end{eqnarray*}
and so, by Axiom $13$, $\varphi_{r_t}$ is the identity in $H$.  It follows that for each $t= 1,2,\ldots,n-1$, we have 
$$\varphi_{w_txw_{t}'^{-1}B_{t+1}' w_{t+1}' x^{-1}w_{t+1}^{-1}}=\varphi_{B_{t+1}}$$ 
and similarly $$\varphi_{B'_1w_1'^{-1}x^{-1}w_1^{-1})}=\varphi_{B_1}.$$

Multiplying these $n$ equations together, we find $$\varphi_{B_1'B_2'\cdots B_n'x^{-1}}=\varphi_{B_1B_2\cdots B_n},$$
so $\varphi_{u'}\varphi_{x}^{-1}=\varphi_{u}$, and thus $\varphi_{u}\varphi_x\varphi_{u'}^{-1}$ is the identity, as desired.
\end{proof}

\begin{lemma}\label{Free}
  The action of $H$ on $X$ is free.
\end{lemma}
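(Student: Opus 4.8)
The plan is to prove freeness by showing that every point stabilizer is trivial. Fix an element $h\in H$ and a class $[v]\in X$ with $v\in L$ satisfying $[v]h=[v]$; it suffices to prove $h=\id$. Write $h=\varphi_w$ with $w=x_1x_2\cdots x_m$, where each letter $x_i$ is a generator in $A$ or the formal inverse of one (using the convention $\varphi_{y^{-1}}=\varphi_y^{-1}$), so that the $x_i$ record exactly the generators of $H$ used to assemble $h$.

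Next I would track the partial products along $w$. For $0\le i\le m$ set $c_i=[v]\varphi_{x_1\cdots x_i}\in X$, so that $c_0=c_m=[v]$ by hypothesis. Since $X$ is precisely the set of equivalence classes of $L$, each $c_i$ has a representative in $L$; choose $v_i\in L$ with $[v_i]=c_i$, taking in particular $v_0=v_m=v$. The heart of the argument is to replace each single step $\varphi_{x_i}$ by the ``difference'' $\varphi_{v_{i-1}}^{-1}\varphi_{v_i}$ using Lemma \ref{Loops}. When $x_i\in A$ we have $[v_{i-1}]\varphi_{x_i}=[v_i]$, hence $(v_{i-1},v_i)\in L_{x_i}$, and invoking Lemma \ref{PrefixAction} we compute
$$[\epsilon]\varphi_{v_{i-1}}\varphi_{x_i}\varphi_{v_i}^{-1}=[v_{i-1}]\varphi_{x_i}\varphi_{v_i}^{-1}=[v_i]\varphi_{v_i}^{-1}=[\epsilon],$$
so Lemma \ref{Loops} forces $\varphi_{v_{i-1}}\varphi_{x_i}\varphi_{v_i}^{-1}=\id$, that is $\varphi_{x_i}=\varphi_{v_{i-1}}^{-1}\varphi_{v_i}$. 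When instead $x_i=y^{-1}$ with $y\in A$, the relation $[v_{i-1}]\varphi_y^{-1}=[v_i]$ rearranges to $[v_i]\varphi_y=[v_{i-1}]$, i.e. $(v_i,v_{i-1})\in L_y$, and the analogous computation applied to $\varphi_{v_i}\varphi_y\varphi_{v_{i-1}}^{-1}$ gives $\varphi_y=\varphi_{v_i}^{-1}\varphi_{v_{i-1}}$, whence again $\varphi_{x_i}=\varphi_{v_{i-1}}^{-1}\varphi_{v_i}$.

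Finally I would telescope: multiplying the identities $\varphi_{x_i}=\varphi_{v_{i-1}}^{-1}\varphi_{v_i}$ for $i=1,\dots,m$ collapses to $\varphi_w=\varphi_{v_0}^{-1}\varphi_{v_m}$, and since $v_0=v_m=v$ this equals $\varphi_v^{-1}\varphi_v=\id$. Hence $h=\id$, the stabilizer of $[v]$ is trivial, and as $[v]$ was arbitrary the action of $H$ on $X$ is free. The step I expect to be the main obstacle — really the only nonformal one — is this per-letter reduction through Lemma \ref{Loops}: one must choose all representatives $v_i$ inside $L$ (so that both Lemma \ref{Loops} and Lemma \ref{PrefixAction} apply), pin down $v_0=v_m=v$ so the telescoping yields $\id$ rather than merely some $\varphi_{v_0}^{-1}\varphi_{v_m}$ comparing two representatives of a single class, and correctly dualize the verification of the $[\epsilon]$-fixing hypothesis in the inverse case $x_i=y^{-1}$. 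Everything beyond this is bookkeeping in the group $H$.
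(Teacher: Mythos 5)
Your proof is correct and follows essentially the same route as the paper's: pick representatives in $L$ for the classes visited along the word, use Lemma \ref{Loops} (via Lemma \ref{PrefixAction}) to convert each single letter into the identity $\varphi_{x_i}=\varphi_{v_{i-1}}^{-1}\varphi_{v_i}$, and telescope to conclude $\varphi_w=\id$. The only differences are cosmetic and in fact slightly cleaner: you anchor the chain directly at the arbitrary fixed point $[v]$ with $v_0=v_m=v\in L$ rather than first reducing to the basepoint $[\epsilon]$ as the paper does, and you spell out the inverse-letter case $x_i=y^{-1}$, which the paper leaves implicit when it applies Lemma \ref{Loops} to letters in $A\cup A^{-1}$.
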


\begin{proof}
   Note that it suffices to show that, for all words $v$, 
$$[\epsilon]\varphi_v=[\epsilon] \implies (\forall u)([u]\varphi_v=[u]).$$

For, if this holds, then if there is $u$ in the prefix closure of $L$ such that $[u]\varphi_w=[u]$, then $[\epsilon]\varphi_{u}\varphi_w\varphi_{u}^{-1}=[\epsilon]$, so $\varphi_{u}\varphi_w\varphi_{u}^{-1}=\id$, and hence $\varphi_w=\varphi_{u}^{-1}\varphi_{u}=\id$ as well.

Let $w$ be an arbitrary word that fixes the basepoint $[\epsilon]$, that is, $[\epsilon]\varphi_w=[\epsilon]$.  (Note that such a word $w$ must exist by Lemma \ref{Transitive}.)  Write $w$ in a reduced form $x_1x_2\cdots x_n$ where each $x_i\in A\cup A^{-1}$, and there are no pairs of consecutive letters of the form $xx^{-1}$ or $x^{-1}x$.

By our definition of the equivalence classes on the prefix closure of $L$, each equivalence class can be represented by an element of $L$ itself, so for each $t=1,\ldots,n-1$, there is some word $u_t\in L$ such that $$[u_t]=[\epsilon]\varphi_{x_1\cdots x_t}=[\epsilon]\varphi_{w_t}=[w(t)].$$  Also set $u_0=u_n=\epsilon$.

Then 
\begin{eqnarray*}
  [\epsilon]\varphi_{u_t}\varphi_{x_{t+1}}\varphi_{u_{t+1}}^{-1} &=& [u_t]\varphi_{x_{t+1}}\varphi_{u_{t+1}}^{-1} \\
       &=& [w(t)]\varphi_{x_{t+1}}\varphi_{u_{t+1}}^{-1} \\
       &=& [w(t+1)]\varphi_{u_{t+1}}^{-1} \\
       &=& [u_{t+1}]\varphi_{u_{t+1}}^{-1} \\
       &=& [\epsilon].
\end{eqnarray*}

By Lemma \ref{Loops}, we have that $\varphi_{u_{t}}\varphi_{x_{t+1}}\varphi_{u_{t+1}}^{-1}$ is the identity in $H$.  Multiplying these together over $t=0,\ldots,n-1$ yields $$\varphi_{u_0}\varphi_{x_1}\varphi_{u_1}^{-1}\varphi_{u_1}\varphi_{x_2}\cdots \varphi_{x_n}\varphi_{u_n}^{-1}=\id,$$ which simplifies to $$\varphi_{x_1\cdots x_n}=\id.$$   Thus $\varphi_{w}=\id$, as desired.
\end{proof}

We now prove Theorem \ref{Axioms}.

\begin{proof}
  Let $G=H$ be the group having the transitive and free action on $X$ described in Lemma \ref{Free}.  Since the action is transitive and free, we may identify the elements of $G$ bijectively with the elements of $X$, as follows.  Identify the identity element of $G$ with $[\epsilon]$, and for each $g\in G$, identify $g$ with $[\epsilon]g$.  Then transitivity gives that this identification is surjective, and freedom gives that this identification is injective, and thus it is a well defined bijection.

  It follows that the action of $G$ on $X$ is isomorphic to the action of $G$ on itself by right multiplication.  Therefore, $M_x$ accepts precisely the pairs of words representing elements of $G$ that differ by the generator $x$ in the Cayley graph for each $x$, and $M_\epsilon$ accepts the pairs of words in $L$ representing the same element of $G$.  It follows that $L$, $M_x$ form an asynchronous automatic structure for $G$, as desired.
\end{proof}

\subsection{Decidability}

In this section, we assume basic familiarity with the concept of decidable predicates.  For a thorough introduction to this topic, see \cite{Cutland}.

In \cite{Epstein}, Epstein, et.~al gave a set of axioms, each of which are first-order sentences involving regular predicates, that allow one to recover a group from a set of synchronous automata if all the axioms are decidably true, or determine that there is no such group if one of the axioms is decidably false.  In Theorem \ref{Axioms}, we have given a similar set of axioms for recovering a group from a set of asynchronous automata, each of which are first-order sentences involving quasi-regular predicates.  Since regular predicates are closed under first order operations, the axioms for synchronous automata are regular and therefore decidable, but it is less clear whether the axioms of Theorem \ref{Axioms} are quasi-regular, or even decidable.  In this section, we investigate the decidability of Axioms $1$-$13$.

Adopting the terminology in \cite{Cutland}, we say that a statement is \textit{partially decidable} if there is an algorithm that halts and outputs true if the statement is true, and does not halt if the statement is false.

\begin{theorem}\label{PartiallyDecidable}
  Let $P(W,\{M_x\})$ denote the statement: ``The finite state automaton $W$ over the alphabet $A$ and asynchronous automata $M_x$ over $A$, one for each $x\in A\sqcup\{\epsilon\}$, do not form the asynchronous automatic structure of any finitely presented group."  Then $P$ is partially decidable.
\end{theorem}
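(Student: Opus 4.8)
The plan is to reduce $P$ to a finite disjunction of semi-decidable statements and then dovetail. First I would dispose of the boundedness issue: by the Remark following Theorem~\ref{Axioms}, I apply the effective procedure of Theorem~\ref{Bounded} to $W,\{M_x\}$, obtaining automata $W',\{M'_x\}$, and note that searching for a loop contained in a single tape's state set decides whether the $M'_x$ are bounded. If they are not bounded, then $W,\{M_x\}$ cannot have been the structure of any group and $P$ holds; otherwise, by Theorem~\ref{Axioms}, $W,\{M_x\}$ is the asynchronous automatic structure of some group if and only if the bounded automata $W',\{M'_x\}$ satisfy Axioms~$1$--$13$. Hence $P$ is equivalent to the assertion that at least one of the thirteen axioms fails, i.e.\ to the disjunction $\bigvee_{i=1}^{13}\neg A_i$, where $A_i$ denotes the $i$th axiom for the bounded automata.

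Since a finite disjunction of partially decidable predicates is partially decidable (run the individual semi-decision procedures in parallel and halt as soon as one of them halts), it suffices to show that each $\neg A_i$ is partially decidable. For this I would first record that every atomic predicate occurring in the axioms is \emph{decidable} for fixed word arguments: membership $\overline{w}\in L$ is decided by running $W$, and $(u,v)\in L_x$, $(u,v)\in L_\epsilon$ are decided by running the corresponding asynchronous automaton on the unique shuffle of $(u\$,v\$)$. The bounded existential subformulas in Axioms~$6$ and $9$ (``$(\exists v)\,(u,v)\in L_x$'' and its mirror) are handled by Proposition~\ref{QuasiClosure}(f) together with the symmetry of quasi-regularity under interchanging the two tapes: the domain and range of the quasi-regular relation $L_x$ are regular languages, so these subformulas reduce to membership of $u$ (resp.\ $v$) in an effectively constructible regular language, which is decidable. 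Consequently each of Axioms~$1$, $6$, $9$ is equivalent to the emptiness of an effectively constructible regular language and is fully decidable, while each universally quantified axiom among Axioms~$2$--$5$, $7$, $8$, $10$, $11$ has a decidable quantifier-free matrix, so that its (existential) negation is semi-decided by enumerating the countably many tuples of words and testing the matrix.

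The main obstacle is the predicate $[v]\varphi_w=[u]$ appearing in Axioms~$12$ and $13$, because the negations of those axioms require this predicate to be not merely semi-decidable but \emph{decidable}: negating Axiom~$13$ yields $(\exists u)([u]\varphi_w=[u])\AND(\exists u)([u]\varphi_w\neq[u])$, and negating Axiom~$12$ asserts that a biconditional in these predicates fails, so in each case I must be able to verify both the predicate and its negation. To obtain decidability, observe that $[v]\varphi_w=[u]$ is by definition the composition $L_{\sigma_1}\circ\cdots\circ L_{\sigma_n}$ of the (inverses of the) quasi-regular relations $L_{\sigma_i}$. Composing the underlying asynchronous automata into a single non-deterministic asynchronous automaton $N_w$ by the standard transducer-composition construction---guessing each intermediate word on the fly and synchronizing the output of one stage with the input of the next---shows that the relation defined by $[v]\varphi_w=[u]$ is weakly regular. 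Membership of a \emph{fixed} pair $(v,u)$ in a weakly regular relation is decidable: any accepting run of $N_w$ reads a shuffle of $(v\$,u\$)$, hence makes exactly $|v|+|u|+2$ lettered moves, and between consecutive lettered moves one may assume no state repeats among the intervening $\epsilon$-moves; thus accepting runs have length bounded in terms of $|v|+|u|$ and the number of states of $N_w$, and only finitely many need be examined. This gives decidability of $[v]\varphi_w=[u]$, hence of the matrices of Axioms~$12$ and $13$.

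With the matrices of all axioms decidable, the negation of each axiom is semi-decidable: each universally quantified axiom (Axioms~$2$--$5$, $7$, $8$, $10$--$12$) is $\Pi_1$ with decidable matrix, so its negation is semi-decided by searching for a counterexample tuple, and $\neg A_{13}$ is a finite disjunction, over the finitely many words $w$ of length at most $2c+2k$, of a conjunction of two $\Sigma_1$ statements, each semi-decided by searching for a witness $u$; a conjunction of finitely many semi-decidable statements is semi-decidable. Running all thirteen semi-decision procedures in parallel and halting when any one halts therefore partially decides $P$. I expect the delicate point to be the decidability of $[v]\varphi_w=[u]$: the intermediate words $v_1,\dots,v_{n-1}$ are a priori unbounded, and boundedness of the $M_x$ does not by itself bound their lengths, so the argument genuinely needs the composition automaton $N_w$ together with the length bound on its accepting runs.
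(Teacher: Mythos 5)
Your proposal is correct, but it takes a genuinely different route at the crux. For Axioms $1$--$11$ you and the paper essentially agree (the paper's Lemma~\ref{1269} also gets full decidability of Axioms $1$, $2$, $6$, $9$ from Proposition~\ref{QuasiClosure}(f), and semi-decides the negations of the other universal axioms by enumerating tuples and running the automata). The divergence is at Axioms $12$ and $13$. The paper never decides the predicate $[v]\varphi_w=[u]$ outright: it evaluates it by \emph{choosing} one witness chain $v_1,v_2,\ldots$ (using Proposition~\ref{QuasiClosure}(f) only to decide whether a next witness exists, and brute-force path search to produce one), and it packages the failure modes of this search into the disjunctions ``Axiom $9$ is false or Axiom $12$ is false'' and ``Axiom $9$ is false or Axiom $13$ is false,'' which is why its final statement is a rewriting of $P$ with those disjunctions run in parallel. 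Implicitly, the correctness of evaluating the existential via a single chosen chain also leans on the well-definedness axioms ($7$, $8$) holding, which the paper does not spell out. Your composition automaton $N_w$ sidesteps all of this: since weakly regular binary relations are the rational relations, the Elgot--Mezei closure of rational relations under composition (and under converse, which you need for the inverse letters $\varphi_{x^{-1}}=\varphi_x^{-1}$ occurring in Axiom $13$, and which you do flag parenthetically) makes $[v]\varphi_w=[u]$ genuinely decidable for fixed arguments via your bounded-run membership test, so each $\neg A_i$ is individually semi-decidable with no interdependence among axioms. That is a cleaner and slightly stronger conclusion; what the paper's route buys is self-containment---it uses only its own Proposition~\ref{QuasiClosure}(f) and no composition theorem for rational relations, at the cost of the conditional-disjunction bookkeeping. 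Two small glosses, shared with or no worse than the paper: in Axiom $13$ the variable $u$ ranges over the prefix closure $\overline{L}$, where $[u]$ is defined via a completion $uz\in L$ and is only well defined when Axiom $12$ holds, so your flat claim that $\neg A_{13}$ alone is semi-decidable needs the same caveat the paper hides in its disjunction (harmless, since if Axiom $12$ fails your $\neg A_{12}$ procedure halts anyway); and your boundedness preprocessing via Theorem~\ref{Bounded} tacitly assumes that the transformed automata satisfying the axioms implies the original ones formed a structure---exactly the looseness already present in the paper's own Remark, whose proof of the theorem ignores unboundedness entirely.
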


We first show that Axioms $1$, $2$, $6$, and $9$ are decidable.

\begin{lemma}\label{1269}
  Let $W$ be a finite state automaton over $A$, and let $M_x$ for each $x\in A\cup\{\epsilon\}$ be asynchronous automata over $A$.  Then Axioms $1$, $2$, $6$, and $9$ are decidable predicates.
\end{lemma}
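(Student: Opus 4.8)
The plan is to examine each of Axioms $1$, $2$, $6$, and $9$ individually and show that in each case the truth of the axiom reduces to a decidable property of the automata $W$ and $\{M_x\}$ — chiefly emptiness or non-emptiness of a language accepted by some effectively-constructible automaton, which is decidable for finite state automata by a reachability check on the state diagram. The recurring principle is that each axiom, once we strip away the universal or existential quantifiers, is a first-order statement whose constituent predicates $(w\in L)$, $((w,v)\in L_x)$, etc.\ are quasi-regular, and that the particular quantifier structure of these four axioms keeps us inside a decidable fragment.

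First I would dispatch Axiom $1$, namely $(\exists w)(w\in L)$: this asserts that $L=L(W)$ is non-empty, which is decidable by checking whether any accept state of $W$ is reachable from its start state. For Axiom $2$, of the form $(\forall w,v)((w,v)\in L_x \implies w\in L \wedge v\in L)$, I would negate it to get an existential statement and observe that its negation holds iff the language $L_x \setminus \{(w,v): w\in L \wedge v\in L\}$ is non-empty. Since $L_x$ is quasi-regular and the projections onto the first and second coordinates intersected with $L$ are manipulations I can perform on two-tape automata (using that $L$ is regular and that asynchronous automata are closed under the relevant operations), this set is accepted by an effectively-constructible automaton whose emptiness is decidable; hence Axiom $2$ is decidable.

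The slightly more delicate cases are Axioms $6$ and $9$, which carry a $\forall\exists$ quantifier alternation: Axiom $6$ reads $(\forall u)(u\in L \implies (\exists v)((u,v)\in L_x))$. The key observation is that the inner existential $(\exists v)((u,v)\in L_x)$ is exactly the projection of the two-variable quasi-regular predicate $(u,v)\in L_x$ onto its first coordinate, and by Proposition~\ref{QuasiClosure}(f) the projection of a two-variable quasi-regular language is \emph{regular} — call the resulting one-variable regular language $D_x$, the ``domain'' of $M_x$. Thus Axiom $6$ becomes $(\forall u)(u\in L \implies u\in D_x)$, i.e.\ the containment $L \subseteq D_x$, which is decidable since both $L$ and $D_x$ are regular and regular languages are closed under the Boolean operations needed to test $L \setminus D_x = \emptyset$. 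Axiom $9$ is handled symmetrically, projecting $(u,v)\in L_x$ onto the second coordinate to obtain a regular ``range'' language $R_x$ and reducing the axiom to the decidable containment $L \subseteq R_x$.

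The main obstacle I anticipate is not any single reduction but making precise that the projection and the Boolean manipulations I invoke can actually be \emph{carried out effectively} on the given automata, producing an automaton whose emptiness can then be tested; the use of Proposition~\ref{QuasiClosure}(f) is what saves Axioms $6$ and $9$, since without the collapse to a regular one-variable language the naive projection would only be weakly regular and we could not directly test the containment. I would therefore be careful to note that the proof of part (f) is constructive, so that $D_x$ and $R_x$ are genuinely obtainable as finite state automata, after which decidability of all four axioms follows from the decidability of emptiness and the effective closure of regular languages under complementation and intersection.
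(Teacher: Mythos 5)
Your proposal is correct, and for three of the four axioms it is the paper's own argument: the paper likewise disposes of Axiom $1$ as a regular (hence decidable) predicate, and for Axioms $6$ and $9$ it observes, exactly as you do, that the inner existential $(\exists v)((u,v)\in L_x)$ collapses to a regular one-variable language by Proposition~\ref{QuasiClosure}(f), so that each axiom becomes a first-order statement over regular predicates — your containments $L\subseteq D_x$ and $L\subseteq R_x$ in slightly different clothing. Your insistence that (f) be effective, so that $D_x$ and $R_x$ are genuinely constructible as finite state automata, is well taken and is left implicit in the paper. Where you genuinely diverge is Axiom $2$: the paper stays purely inside first-order logic, negating and distributing until the only non-regular ingredients are the two projections $(\exists w_2)((w_1,w_2)\in L_x)$ and $(\exists w_1)((w_1,w_2)\in L_x)$, which (f) again renders regular, so no new automaton constructions are needed beyond facts already on record. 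You instead propose an operational route: build an automaton for $L_x\setminus\{(w,v):w\in L\wedge v\in L\}$ and test emptiness.

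That route works but one step is overstated. Since $L_x\setminus(L\times L)=\bigl(L_x\cap(\overline{L}\times A^{\ast})\bigr)\cup\bigl(L_x\cap(A^{\ast}\times\overline{L})\bigr)$ and quasi-regular languages are \emph{not} closed under union (Proposition~\ref{QuasiClosure}(b)--(c)), you have no guarantee that this set is accepted by a single asynchronous automaton, and the phrase ``asynchronous automata are closed under the relevant operations'' hides exactly the closure properties the paper shows can fail. The repair is easy: test emptiness of the two intersections separately (a union is empty iff both pieces are), or note that the union is weakly regular by Proposition~\ref{WeaklyClosure}(b) and that emptiness for such automata is decidable by reachability. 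You would also need to write down the product construction showing that intersecting a quasi-regular language with a rectangle $K_1\times K_2$ of regular languages is (effectively) quasi-regular — true and routine, but nowhere proved in the paper, whereas the paper's purely logical rewriting of Axiom $2$ needs nothing beyond (f) and closure of regular predicates under first-order operations.
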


\begin{proof}
  Note that Axiom $1$ is a regular predicate and is therefore decidable.

  For Axiom $2$, we can simplify the statement as follows:
\vspace{12pt}

$(\forall w_1,w_2) ((w_1,w_2)\in L_x) \implies (w_1\in L\wedge w_2\in L))$

$\neg(\exists w_1,w_2) [((w_1,w_2)\in L_x) \wedge (w_1\not\in L\vee w_2\not\in L)]$

$\neg(\exists w_1,w_2) [(((w_1,w_2)\in L_x) \wedge (w_1\not\in L)) \vee (((w_1,w_2)\in L_x) \wedge (w_2\not\in L))]$

$\neg[(\exists w_1,w_2)(((w_1,w_2)\in L_x) \wedge (w_1\not\in L)) \vee (\exists w_1,w_2)((w_1,w_2)\in L_x \wedge (w_2\not\in L))]$

$\neg\left\{\left[ (\exists w_1)((\exists w_2)((w_1,w_2)\in L_x)\wedge (w_1\not\in L)) \right] \vee \left[ (\exists w_2)((\exists w_1)((w_1,w_2)\in L_x)\wedge (w_2\not\in L)) \right]\right\}$
\vspace{10pt}

In the last formulation of the statement above, the smaller statements $(\exists w_2)((w_1,w_2)\in L_x)$ and $(\exists w_1)((w_1,w_2)\in L_x)$ define regular languages in the variables $w_1$ and $w_2$ respectively, by part (f) of Proposition \ref{QuasiClosure}.  The statements $w_1\not\in L$ and $w_2\not\in L$ are regular as well, since regular predicates are closed under negation.  Thus we have rewritten the original statement as a first-order statement involving regular predicates, which is regular and hence decidable.

Axioms $6$ and $9$ can similarly be stated in terms of regular predicates.  This completes the proof.
\end{proof}

We now prove Theorem \ref{PartiallyDecidable}.  

\begin{proof}
  By Lemma \ref{1269}, the negations of Axioms $1$, $2$, $6$, and $9$ are decidable, and hence partially decidable.

  We now show that the negation of Axiom $3$, that is,
$$(\exists w)(w\in L \AND (w,w)\not\in L_\epsilon),$$
is partially decidable.  Indeed, we can order the words in $A^\ast$ in length-lexicographic order (with respect to some ordering of $A$), and check in order if each satisfies $w\in L \AND (w,w)\not\in L_\epsilon$ by passing $w$ and $(w,w)$ through the automata $W$ and $M_\epsilon$, respectively.  When we reach a word $w$ that satisfies the two conditions, we stop, and otherwise we check the next word in the length-lexicographic ordering.  This procedure halts if the statement is true, and runs indefinitely if it is false, as desired.

  This argument can be easily modified to show that Axioms $4$, $5$, $7$, $8$, $10$, and $11$ are partially decidable.

  We next show that the statement ``Either Axiom $9$ is false or Axiom $12$ is false'' is partially decidable.  Consider the natural product ordering of $(A^{\ast})^4$ that arises from the length-lexicographic ordering of $A^\ast$.  We apply the following procedure to the $4$-tuples $(u,w,w',v)$ in $(A^{\ast})^4$ in order.  We use $W$ to check if $uw$ and $uw'$ are in $L$.  If not, we go on to the next $4$-tuple.

If $uw$ and $uw'$ are both in $L$, then we check the validity of each of the statements $[v]\varphi_w=[uw]$ and $[v]\varphi_{w'}=[uw']$.  Let $\sigma_1\sigma_2\ldots\sigma_n$ be the letters of $w$.  We first check if there is some $v_1$ for which $(v,v_1)\in L_{\sigma_1}$.  (We can check this since the projection of a two-variable quasi-regular language is regular.)  If there is no such $v_1$, we stop; this proves the negation of Axiom $9$.  If there is such a $v_1$, we can choose one by searching through all finite paths starting at the start state in order of length until we come across the first shuffle that spells $v$ in the left state set.  We now apply the same procedure to either choose some $v_2$ such that $(v_1,v_2)\in L_{\sigma_2}$, or halt if no such $v_2$ exists.  

Continuing in this fashion, if we have chosen a word $v_i$, we determine whether there is some $v_{i+1}$ with $(v_i,v_{i+1})\in L_{\sigma_{i+1}}$ for $i\le n-1$.  If at the $n$th step we obtain a word $v_n$, we check if $(v_n,uw)\in L_{\epsilon}$.  If so, the statement $[v]\varphi_w=[uw]$ is true, and if not, it is false.  We similarly check the validity of $[v]\varphi_{w'}=[uw']$.  If both are true or both are false, we go on to the next $4$-tuple.  Otherwise, we halt, as this proves the negation of Axiom $12$.  Thus, the statement ``Either Axiom $9$ is false or Axiom $12$ is false'' is partially decidable.

  We next show that ``Either Axiom $9$ is false or Axiom $13$ is false'' is partially decidable.  For each fixed $w$ with length at most $2c+2k$, we check all $u$ in the prefix closure of $L$ to determine if $[u]\varphi_w=[u]$, with a procedure that halts if Axiom $9$ is false, as before.  If we find two strings $u,v$ such that the $[u]\varphi_w=[u]$ but $[v]\varphi_w\neq[v]$, we halt; Axiom $13$ is false.  Otherwise, our procedure runs indefinitely.

  Finally, by Theorem \ref{Axioms}, the statement $P(W,\{M_x\})$ is equivalent to the statement: 
\begin{quote}
``Axiom 1 is false or Axiom 2 is false or ... or Axiom 13 is false,''
\end{quote}
which can be rewritten as 
\begin{quote}
``Axiom 1 is false or Axiom 2 is false or ... or (Axiom 9 is false or Axiom 12 is false) or (Axiom 9 is false or Axiom 13 is false).''
\end{quote}
 The latter is partially decidable, as we can run each of our above procedures in parallel.
\end{proof}

\section{Future Work}

  It remains to be shown whether all of the Axioms of Theorem \ref{Axioms} are decidable.  To do so, it would be useful to further understand the closure properties of quasi-regular and weakly regular predicates, as either can be used to define asynchronous automatic groups. (See \cite{Shapiro})  

  We have shown that quasi-regular languages are closed under complementation but not under union, and weakly regular languages are closed under union but not under complementation.  Thus, it may also be of interest to investigate intermediate classes of languages in order to find one that is closed under both complementation and union.  For instance, the class of all finite unions of quasi-regular languages is larger than the class of quasi-regular languages and smaller than that of weakly regular languages, and it is closed under union (however, it is not closed under complementation). 

  Finally, we note that Rubin \cite{Rubin} defined a generalized notion of quantifiers, and classified the unary quantifiers that preserve regularity.  It would be of interest to study which generalized quantifiers preserve quasi-regular and weakly regular predicates.

\section{Acknowledgments}

This research was done at MIT through the Undergraduate Research Opportunities Program.

I thank my supervisor, Mia Minnes, for her teaching and guidance throughout the course of this research.  I also thank the Colorado State University mathematics colloquium for the opportunity to give a talk on this research.  Finally, I thank Ken G.~Monks, Ken M.~Monks, Paul Christiano, and Rishi Gupta for numerous helpful conversations and for their support along the way.


\begin{thebibliography}{99}
 \bibitem{Epstein} D.B.A.~Epstein, J.W.~Cannon, D.F.~Holt, F.V.F.~Levi, M.S.~Paterson, W.P.~Thurston, \textit{Word processing in Groups}, Jones and Bartlett Publishers, Boston, 1992.
 \bibitem{Cutland} N.~Cutland, \textit{Computability, An introduction to recursive function theory}, Cambridge University Press, 1980.
 \bibitem{KN} B.~Khoussainov and A.~Nerode,  Automatic presentations of structures, \textit{Lecture Notes in Computer Science} \textbf{960} (1995), 367 - 392.
 \bibitem{RabinScott} M.~Rabin and D.~Scott, Finite Automata and their Decision Problems, \textit{IBM Journal of Research and Development} \textbf{3} (1959), 114-125.
 \bibitem{Rubin} S.~Rubin, Automata presenting structures: a survey of the finite string case,  \textit{Bulletin of Symbolic Logic}, Vol. 14, Issue 2 (2008), 169-209.
 \bibitem{Shapiro} M.~Shapiro, Deterministic and non-deterministic asynchronous automatic structures, \textit{International Journal of Algebra and Computation} Vol. 2, No. 3 (1992), 297-305.

\end{thebibliography}
\end{document}